\documentclass[11pt]{amsart}
\usepackage{amsmath,amssymb,amsfonts}%
\usepackage{amsthm}%
\usepackage[title]{appendix}%
\usepackage{xcolor}%
\usepackage{comment}
\usepackage{ytableau}
\newtheorem{thm}{Theorem}[section]
\newtheorem{definition}[thm]{Definition}
\newtheorem{lemma}[thm]{Lemma}
\newtheorem{corollary}[thm]{Corollary}
\newtheorem{prop}[thm]{Proposition}
\newtheorem{example}[thm]{Example}

\newtheorem{question}[thm]{Question}
\newtheorem{remark}[thm]{Remark}

\title{A new approach to the grammic monoid}

\author[M. Johnson]{Marianne Johnson}
\address{University of Manchester}
\email{Marianne.Johnson@manchester.ac.uk}

\author[A. Malheiro]{Ant\'{o}nio Malheiro}
\address{Universidade NOVA de Lisboa}
\email{ajm@fct.unl.pt}

\begin{document}
\begin{abstract}We give an alternative description of the grammic monoid in terms of weakly increasing subsequences. Specifically, we show that words $u,v$ in the generators $\{1,\ldots, n\}$ determine the same element of the grammic monoid of rank $n$ if and only if for all $1 \leq p \leq q$, the maximum length of a weakly increasing subsequence on alphabet $\{p,\ldots, q\}$ is the same in $u$ and $v$. Our proof makes use of a particular tropical representation of the plactic monoid determined by such sequences: we demonstrate that the grammic monoid is isomorphic to the image of this representation, and (by applying a result of the first author and Kambites) immediately deduce that the grammic monoid of rank $n$ satisfies exactly the same semigroup identities as the monoid of $n \times n$ upper triangular tropical matrices. This gives a partial generalisation of a result of Volkov, who has shown that the grammic monoid of rank $3$ satisfies exactly the same semigroup identities as the plactic monoid of rank $3$ which in turn is known (by applying a result of the first author and Kambites) to satisfy the exactly the same semigroup identities as the monoid of $3 \times 3$ upper triangular tropical matrices. Furthermore, we find that the grammic monoid of infinite rank does not satisfy any non-trivial semigroup identity, and demonstrate that the grammic congruence satisfies some useful compatibility properties.
\end{abstract}

\maketitle
\section{Introduction}
The grammic monoid of rank $n$ was introduced by Choffrut \cite{C} and is defined in terms of an action of the free monoid over $[n]:=\{1, \ldots, n\}$ on
the set of rows of semistandard Young tableaux with entries from $[n]$ determined by considering the effect of Schensted insertion on bottom rows only. Choffrut's definition was inspired by the \emph{stylic monoids} of Abram and Reutenauer \cite{AR}, which arise from an action of the free monoid on the set of columns defined by Schensted insertion. These two families of monoids are very different: for example, since the number of columns is finite,  the stylic monoid is finite, whereas the grammic monoid is infinite. Following the publication of \cite{AR}, the stylic monoid has attracted attention from a number of authors who have described:  the semigroup identities satisfied by the stylic monoids \cite{Vs} (see also \cite{AR3} for an alternative proof of the same result obtained independently and contemporaneously by a different method), the quiver algebras of stylic monoids \cite{AR2}, and the extended Green's relations of the stylic monoid \cite{GR}. By contrast, relatively little is known about the structure and properties of the grammic monoids, although it is clear that grammic monoids are closely related to the well-studied \emph{plactic monoids} \cite{S, Kn, LS}; specifically, the grammic monoid of rank $n$ is a quotient of the plactic monoid of rank $n$.

The grammic monoids of ranks $1$, $2$ and $3$ are well understood: the grammic monoid of rank $1$ is nothing more than the free monoid of rank $1$; the grammic monoid of rank $2$ coincides with the plactic monoid of rank $2$; and  Choffrut \cite{C} has shown that the grammic monoid of rank $3$ is the (proper) quotient of plactic monoid of rank $3$ by the congruence generated by the pair $(bacb,cbab)$ where $a<b<c$. It follows from a result of Daviaud, the first author and Kambites \cite{DJK} (see also \cite[Remark 4.6]{JK}) that the grammic monoid of rank $2$ satisfies \emph{exactly the same} semigroup identities as the monoid of $2 \times 2$ upper triangular tropical matrices. Volkov \cite{V} has shown that the grammic monoid of rank $3$ satisfies exactly the same semigroup identities as the plactic monoid of rank $3$, which by a result of the first author and Kambites \cite{JK} yields that the grammic monoid of rank $3$ satisfies exactly the same semigroup identities as the monoid of $3 \times 3$ upper triangular tropical matrices.

The aim of this article is to extend the results of \cite{C} and \cite{V} to the rank $n$ case, by giving a new description of the grammic monoid (Theorem \ref{thm:sequences}) that allows us to demonstrate (in Corollary \ref{cor:identities}) that the variety $\mathbf{V}_n$ of semigroups generated by the $n \times n$ upper triangular tropical matrices coincides with the variety of semigroups generated by the grammic monoid of rank $n$. The variety $\mathbf{V}_n$ has received substantial attention from a number of authors \cite{A,CKKMO,CHLS, DJK, I, JK, K, O}. Moreover, Corollary \ref{cor:identities} makes an interesting contrast with the results of \cite{AR3}, where it was shown that the stylic monoid of rank $n$ satisfies exactly the same identities as the monoid of $(n+1) \times (n+1)$ \emph{uni-triangular} tropical matrices, and hence generates the variety $\mathbf{J}_n$. As remarked in \cite{V}, the variety $\mathbf{J}_n$ is generated by several interesting (and seemingly disparate) monoids arising from different parts of mathematics. The results of \cite{DJK}, \cite{JK} and \cite{V} gave the first clues that a similar clustering phenomenon may be true of the variety $\mathbf{V}_n$.  The tantalising question (first raised in both \cite{CKKMO} and \cite{I2}) of whether the variety of semigroups generated by the monoid of $n \times n$ upper triangular matrices is equal to the variety generated by the plactic monoid of rank $n$ remains open.

The paper is structured as follows: in Section 2 we outline the preliminary definitions and results needed from \cite{C}; in Section 3 we prove our main result exhibiting an alternative description of the grammic monoid using an existing (non-faithful) tropical matrix representation of the plactic monoid; in Section 4 we demonstrate that we may immediately deduce several results concerning the identities satisfied by grammic monoids, in particular giving a generalisation of the main result of \cite{V}; in Section 5 we prove that the grammic monoid is compatible with restriction to alphabet intervals, packing, standardisation and Sch\"utzenberger involution; in Section 6 we turn our attention to questions concerning presentations of the grammic monoid exhibiting some generalisations of the Knuth relations and demonstrating that grammic monoids are quotients of the so-called `patience sorting monoids'; and in Section 7 we outline several open questions for future research.

\section{The grammic monoid}

\subsection{Words, tableaux and the plactic monoid}
For a finite alphabet $\Sigma$, we write $\Sigma^*$ to denote the free monoid on $\Sigma$, that is, the set of all finite
(possibly empty) words over $\Sigma$ under the operation of concatenation, and we write $\varepsilon$ to denote the empty word. We write $\Sigma^+$ for the
subsemigroup of non-empty words in $\Sigma^*$, which is the free semigroup on $\Sigma$. For $w \in \Sigma^*$ and
and $a \in \Sigma$ we write $|w|$ for the length of $w$ and $|w|_a$ for the number of occurrences of the
letter $a$ in $w$ (both of which are zero if $w = \varepsilon$). For $u, w \in \Sigma^+$ with $u=u_1 \cdots u_d$ where each $u_i \in \Sigma$, we say that $u$ is a scattered subword of $w$ if there exist words $v_0, v_1, \ldots, v_{d} \in \Sigma^*$ such that $w=v_0u_1v_1\cdots u_dv_d$. 

A partition of a natural number $k$ is any tuple of the form $(\lambda_1, \ldots, \lambda_d)$ where $\lambda_1 \geq  \cdots \geq \lambda_d >0$ and $\sum_{i=1}^d \lambda_i = k$. Partitions may be visually represented by Young diagrams: the Young diagram corresponding to a partition $\lambda=(\lambda_1,\ldots, \lambda_d)$ is the left-aligned array of identically-sized boxes, with  $\lambda_i$ boxes in the $i$th row, and where rows are indexed from bottom to top (i.e. there are $\lambda_1$ boxes in the bottom row). A semistandard Young tableau over an ordered alphabet $\mathcal{A}$ is a filling of a Young diagram with exactly one entry from $\mathcal{A}$ placed in each box of the diagram in such a way that reading from left to right along each row yields a weakly increasing sequence, and reading from bottom to top along any column yields a
strictly increasing sequence. An
example of a semistandard Young tableau with entries taken from the alphabet $\mathcal{A} = \{1<\cdots<9\}$ is given below:
\begin{center}
\begin{ytableau}
	7 & 8\\
	4 & 6\\
	3 & 5 & 7 \\
	2 & 2 & 5 &6
\end{ytableau}
\end{center}
The terminology `semistandard Young tableau' is used to distinguish from related concepts; since none such appear in this article, to abbreviate the text we will often write simply `tableau' to mean `semistandard Young tableau'.  It will also be convenient to have the notion of an empty tableau (i.e. containing no boxes, and hence also no entries), which we shall denote by $\emptyset$.

The following algorithm takes a tableau $T$ and a symbol $a \in \mathcal{A}$ and produces a new tableau by `inserting' $a$ into $T$ as follows:

\medskip

\noindent\textbf{Schensted's row insertion algorithm}
\\
\textit{Input:} A tableau $T$ and a symbol $a \in \mathcal{A}$.\\ \
\textit{Output:} A tableau $T \leftarrow a$.
\begin{enumerate}
\item[0.] If $T$ is the empty tableau, output the tableau consisting of a single box containing $a$.
\item If $a$ is greater than or equal to every entry in the bottom row of $T$, add $a$ as an entry at the rightmost end of
  $T$ and output the resulting tableau.

\item Otherwise, let $z$ be the leftmost entry in the bottom row of $T$ that is strictly greater than $a$. Replace $z$ by
  $a$ in the bottom row and recursively insert $z$ into the tableau formed by removing this new bottom row from $T$. (Note
  that the recursion may end with an insertion into an `empty row' above all the existing rows of the input tableau $T$.)

\end{enumerate}

\medskip
In step 2 of the above algorithm, we say that insertion of $a$ `bumps' entry $z$ into row $2$. The entry $z$ may in turn `bump' an entry from row  $2$ into row $3$, and so on.

To each word $w \in \mathcal{A}^*$ one can associate a tableau which we denote by $P(w)$ by repeated application of Schensted's row insertion algorithm; specifically, we associate the empty tableau to the empty word and if $w=w_1 \cdots w_k \in \mathcal{A}^+$ where each $w_i \in \mathcal{A}$, then $P(w) = (\cdots ((\emptyset \leftarrow w_1) \leftarrow w_2) \cdots \leftarrow w_k$.  In general, many words produce the same tableau; given a tableau $T$ one can construct a word $w$ for which $P(w)=T$ by either reading the rows left-to-right and top-to-bottom (the `row reading') or reading the columns top-to-bottom and left-to-right (the `column reading'). For example,  $P(78463572256)$ is equal to the tableau pictured above, as is $P(74328652756)$. 

The mapping $w \mapsto P(w)$
induces an equivalence relation $\equiv_{\rm plac}$ on $\mathcal{A}^*$ defined by $w \equiv_{\rm plac} v$ if and only if $P(w)=P(v)$, and this relation is seen to be a (two-sided) congruence on $\mathcal{A}^*$. Alternatively, $\equiv_{\rm plac}$ can be defined as the least congruence on $\mathcal{A}^*$ generated by the set of Knuth relations over $\mathcal{A}$:
$$\mathcal{R}_{{\rm plac}, \mathcal{A}}:=\{(yz x,yxz):  x,y, z \in \mathcal{A}, x < y \leq z \} \cup \{(zxy,xzy): x,y, z  \in \mathcal{A}, x \leq  y < z \}.$$

If $\mathcal{A}$ and $\mathcal{A}'$ are ordered alphabets of the same cardinality, then it is clear that the quotients by the corresponding congruences are isomorphic. Throughout we shall write $[n]$ to denote the ordered alphabet $ \{1<2 < \cdots < n\}$, $\mathbb{N}$ to denote the set of positive integers and $\mathbb{N}_0$ to denote the set of non-negative integers. The plactic monoid of rank $n$ is then the quotient $\mathbb{P}_n:=[n]^*/  \equiv_{\rm plac}$, and the plactic monoid of countably infinite rank is $\mathbb{P}_{\mathbb{N}}:=\mathbb{N}^*/\equiv_{\rm plac}$.

\subsection{The grammic monoid}
We view the elements of $\mathbb{N}_0^n$ as possible rows of a tableau over alphabet $[n]$; specifically $\gamma = (\gamma_1, \ldots, \gamma_n) \in \mathbb{N}_0^n$ corresponds to the row with total number of $i$'s equal to $\gamma_i$ for all $i$. For $\gamma \in \mathbb{N}_0^n$ we define a partial map
$$f_\gamma: [n] \rightarrow [n], \quad  f_\gamma(a) = {\rm min} \{t: t>a \mbox{ and } \gamma_t>0\} \mbox{ if this set is non-empty}.$$
Note that where $f_\gamma(a)$ is defined, it is equal to the first element to be `bumped' during insertion of $a$ into any tableau with  bottom row $\gamma$, and in this case $a<f_\gamma(a)$. If no bumping occurs during the insertion of $a$, then $f_\gamma(a)$ is not defined.  Now let $e_{\gamma,a} \in \mathbb{Z}^n$ be the vector with $i$th entry for $1 \leq i \leq n$ defined as follows
$$(e_{\gamma,a})_i=\begin{cases} 1 & \mbox{ if } i=a\\
-1 & \mbox{ if } i=f_\gamma(a)\\
0 & \mbox{ else}.
\end{cases}$$
Now, each  $a \in [n]$ acts on $\mathbb{N}_0^n$ via $\gamma \cdot a = \gamma + e_{\gamma,a}$ corresponding to the effect (on the bottom row only) of inserting symbol $a$ into any tableau with bottom row $\gamma$ using Schensted's algorithm. This extends recursively to give an action of $[n]^*$ on $\mathbb{N}_0^n$.

The action of words on rows gives rise to a congruence on $[n]^*$ defined by $w \equiv_{\rm gram} v$ if $\gamma \cdot w = \gamma \cdot v$ for all $\gamma \in \mathbb{N}_0^n$. The grammic monoid (of rank $n$) is then the quotient $\mathbb{G}_n:=[n]^*/\equiv_{\rm gram}$. We may also define a congruence $\equiv_{\rm Gram}$ on $\mathbb{N}^*$ by $w \equiv_{\rm Gram} v$ if $\gamma \cdot w = \gamma \cdot v$ for all (finite) bottom rows $\gamma$ over alphabet $\mathbb{N}$. The grammic monoid of infinite rank is then $\mathbb{G}_{\mathbb{N}}:=\mathbb{N}^*/\equiv_{\rm Gram}$.
We recall several key facts from \cite{C}:
\begin{itemize}
\item If $P(u) = P(v)$ (that is, if $u$ and $v$ are equivalent in the plactic monoid), then $u \equiv_{\rm gram} v$, but for $n \geq 3$ the converse does not hold.
\item  By taking $\gamma = (N, \ldots, N)$ where $N >|u|, |v|$, one can deduce that if $u \equiv_{\rm gram} v$, then $|u|_a=|v|_a$ for all $a \in [n]$, and hence in particular $|u|=|v|$ (see \cite[Lemma 1]{C}).
\item By taking $\gamma = (0, \ldots, 0)$, one finds that if $u \equiv_{\rm gram} v$, then $P(u)$ and $P(v)$ must have the same bottom row (see  \cite[Lemma 2]{C}).
\item For $u$ and $v$ of the same length $k$, in order to determine whether $u \equiv_{\rm gram} v$ it suffices to consider the action of the two words on all $\gamma \in [k+1]^n$ (see \cite[Proposition 1]{C}). 
\item If $u,v \in [2]^*$, then  $u\equiv_{\rm gram}v$ if and only if $u\equiv_{\rm plac}v$ (see \cite[Corollary 1]{C}).
\item The congruence $\equiv_{\rm gram}$ on $[3]^*$ is the least congruence generated by $\mathcal{R}_{{\rm plac}, 3} \cup \{(3212,2132)\}$ (see \cite[Theorem 1]{C}).
\end{itemize}

\section{A tropical representation of the grammic monoid}
Let $\mathbb{T}$ denote the tropical semiring consisting of the set $\mathbb{R} \cup \{-\infty\}$
under the operations $a \oplus b = {\rm max}(a, b)$ and $a \otimes b = a + b$ for all $a,b \in \mathbb{T}$, where $-\infty$ is the ``zero'' element, that is  $-\infty \oplus a = a \oplus -\infty = a$ and $-\infty \otimes a = a \otimes -\infty = -\infty$, for all $a \in \mathbb{T}$. We write ${\rm M}_n(\mathbb{T})$ to denote the monoid of all $n \times n$  matrices with entries from $\mathbb{T}$ under the matrix multiplication induced from operations of $\mathbb{T}$ in the obvious way and with identity element the $n \times n$ matrix with all diagonal equal to $0$ and
all other entries equal to $-\infty$, which we shall denote by $I_n$. We say
that $A \in M_n(\mathbb{T})$ is upper triangular if $A_{i,j} = -\infty$ for all $i > j$, and write
${\rm UT}_n(\mathbb{T})$ for the submonoid of $n \times n$ upper triangular matrices over $\mathbb{T}$. A tropical representation of a semigroup $S$ is a semigroup morphism $\varphi: S \rightarrow {\rm UT}_n(\mathbb{T})$.  We say that the representation is faithful if the morphism is injective.

The following definition will be crucial in all that follows.

\begin{definition}
For each word $w \in [n]^+$, and each pair $p, q \in [n]$ with $p \leq q$, let $w_{p,q}$ denote the maximal length of a weakly increasing subsequence
of $w$ with entries in the interval $[p,q]$.
\end{definition}

\begin{example}
If $w = 1535372549$ then $w_{3,5} = 3$; note that there are several maximal length weakly increasing subsequences of $w$ having entries in in the interval $[3,5]$, for example $555$, $334$, $335$, and $355$.
\end{example}

\begin{lemma}\cite[Lemma 5.1 and Lemma 5.2]{CKKMO}
\label{lem:rep}
The function $\varphi_n: [n]^* \rightarrow {\rm UT}_n(\mathbb{T})$ defined by $\varphi_n(\varepsilon) = I_n$ and for all $w \in [n]^+$ with $p, q \in [n]$,
$$\varphi_n(w)_{p,q} = \begin{cases}
   w_{p,q} & \mbox{ if }1 \leq p \leq  q \leq n\\
   -\infty & \mbox{ else}.
\end{cases}$$ is a monoid morphism. Moreover, if $u \equiv_{\rm plac} v$, then $\varphi_n(u) = \varphi_n(v)$.
\end{lemma}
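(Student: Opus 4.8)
The matrix $\varphi_n(w)$ is upper triangular by construction, so its image lies in ${\rm UT}_n(\mathbb{T})$, and two things remain: that $\varphi_n$ is a monoid morphism and that it is constant on $\equiv_{\rm plac}$-classes. The plan is to reduce the morphism property to a single combinatorial identity comparing the weakly increasing subsequences of a concatenation $uv$ with those of $u$ and of $v$, and then to obtain plactic invariance by a direct check on the Knuth relations.

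Since $\varphi_n(\varepsilon) = I_n$ is the identity of ${\rm UT}_n(\mathbb{T})$, the only substantial part of the morphism property is to verify $\varphi_n(uv) = \varphi_n(u) \otimes \varphi_n(v)$ for nonempty $u,v$. Expanding the tropical product and discarding the $-\infty$ contributions coming from below the diagonal, this reduces to proving, for all $1 \leq p \leq q \leq n$,
\[
(uv)_{p,q} \;=\; \max_{p \leq r \leq q}\bigl(u_{p,r} + v_{r,q}\bigr).
\]
I would establish this by two inequalities. For ``$\geq$'', fix $r$ with $p \leq r \leq q$ and concatenate a longest weakly increasing subsequence of $u$ on the interval $[p,r]$ with one of $v$ on $[r,q]$; as every entry of the first block is at most $r$ and every entry of the second is at least $r$, the result is a weakly increasing subsequence of $uv$ on $[p,q]$ of length $u_{p,r}+v_{r,q}$. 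For ``$\leq$'', take a longest weakly increasing subsequence $s$ of $uv$ on $[p,q]$, split it as $s = s_u s_v$ according to whether letters are read from $u$ or from $v$, and let $r$ be the last (hence largest) entry of $s_u$; then $s_u$ lies in $[p,r]$ and, as $s$ is weakly increasing, $s_v$ lies in $[r,q]$, giving $|s| \leq u_{p,r}+v_{r,q}$.

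For plactic invariance, note first that because $\varphi_n$ is a morphism the relation $u \sim v \iff \varphi_n(u)=\varphi_n(v)$ is a congruence; since $\equiv_{\rm plac}$ is the least congruence containing $\mathcal{R}_{{\rm plac},[n]}$, it suffices to show $\varphi_n$ identifies the two sides of each Knuth relation. The key observation is that $w_{p,q}$ depends only on the subword $w|_{[p,q]}$ obtained from $w$ by deleting every letter outside $[p,q]$. For a relation $(yzx,yxz)$ with $x<y\leq z$, restricting to $[p,q]$ either produces two identical words (whenever at least one of $x,y,z$ lies outside $[p,q]$, using that $[p,q]$ is an interval so that $y$ survives the restriction whenever both $x$ and $z$ do), or leaves $yzx$ and $yxz$ unchanged; in the latter case one checks in a line that each has longest weakly increasing subsequence of length $2$. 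The family $(zxy,xzy)$ with $x\leq y<z$ is treated in exactly the same way.

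The main obstacle is the bookkeeping in the ``$\leq$'' half of the product identity: the pivot argument must be phrased to also cover the degenerate splits in which $s_u$ or $s_v$ is empty (taking $r=p$ or $r=q$ respectively), and one must separately confirm that the convention $\varphi_n(\varepsilon)=I_n$ --- which deliberately differs from the value $w_{p,q}=0$ the formula would assign off the diagonal --- is precisely what makes multiplication by the empty word compatible with $\varphi_n$.
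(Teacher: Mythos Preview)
The paper does not supply its own proof of this lemma; it is stated with a citation to \cite[Lemmas~5.1 and~5.2]{CKKMO} and then immediately followed by a remark. Your argument is correct and is essentially the standard one found in that reference: the morphism property reduces to the identity $(uv)_{p,q}=\max_{p\le r\le q}(u_{p,r}+v_{r,q})$, which you prove by the natural splice/split argument, and plactic invariance follows from a direct check on the Knuth generators together with the fact that $\ker\varphi_n$ is a congruence. Your treatment of the edge cases (empty $s_u$ or $s_v$; the interval observation that $x,z\in[p,q]$ forces $y\in[p,q]$) is sound.
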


\begin{remark}
\label{rem:rep}
The interest in tropical representations of the plactic monoid stems from work of Kubat and Okni\'{n}ski \cite[Theorem 2.6]{KO}, who proved that the plactic monoid of rank $3$ satisfies a non-trivial semigroup identity and conjectured this to be the case for all finite rank plactic monoids. The morphism defined above appeared in work of Cain, Klein, Kubat, Okni\'{n}ski  and the second author \cite{CKKMO}, who used this (together with another map) to construct a faithful tropical representation of the plactic monoid of rank $3$; a similar approach is taken by Izhakian \cite{I2} to exhibit a faithful tropical representation of the plactic monoid of rank $3$. Since the monoid of $n \times n$ upper triangular tropical matrices  satisfies non-trivial semigroup identities \cite{I} (see also \cite{O} for an alternative short proof), these faithful tropical representations of the plactic monoid of rank $3$ give an alternative proof of \cite[Theorem 2.6]{KO}. The morphism $\varphi_n$ is not injective for larger values of $n$ and so the methods of \cite{I2} and \cite{CKKMO} break down in higher ranks. However, by building on the underlying idea of counting maximal subsequences of certain kinds, a faithful tropical representation of the plactic monoid was later constructed by Kambites and the first author \cite{JK}, hence providing a proof of the conjecture of Kubat and Okni\'{n}ski \cite{KO}.
\end{remark}

\begin{definition}
\label{def:BandT}
Let $u \in [n]^+$. For $i=1, \ldots, n$ let $\beta(u,i)$ denote the number of $i$'s in the bottom row of $P(u)$, and (as above) let $u_{1,i}$ denote the maximal length of a weakly increasing subsequence of $u$ with entries in $\{1,\ldots, i\}.$ We define 
\begin{eqnarray*}
B(u) &=& (\beta(u,1), \ldots, \beta(u,n)) \in \mathbb{N}_0^n,\\
T(u) &=& (u_{1,1}, \ldots, u_{1,n}) \in \mathbb{N}_0^n.
\end{eqnarray*} Thus $B(u)$ is an encoding of the bottom row of tableau $P(u)$, and $T(u)$ is the top row of matrix $\varphi_n(u)$.
\end{definition}

\begin{remark}
\label{rem:action}
It follows from the definition above that for all $\gamma \in \mathbb{N}_0^n$ and all $u \in [n]^*$,
$$\gamma\cdot u = B(1^{\gamma_1}2^{\gamma_2} \cdots n^{\gamma_n} u).$$
\end{remark}
\begin{remark}
\label{rem:contraction}
Fix $u \in [n]^+$ and $1 \leq p \leq n$, and let $\bar{u}$ denote the word obtained from $u$ by deleting all occurrences of the symbols $1, \ldots, p-1$. Then for each $q \geq p$ it is clear that $u_{p,q} = \bar{u}_{p,q}$.  Moreover, since the least symbol occurring in $\bar{u}$ is greater than or equal to $p$, it follows that $\bar{u}_{p,q}$ is equal to the number of symbols in the bottom row of $P(\bar{u})$ that lie in the range $[p,q]$.
\end{remark}

\begin{example}
\label{example:store}
Let $n=8$ and $u=78463572256$. 
For $p=6$, we have $\bar{u} = 78676$. Thus
\begin{center}
$P(u) =$\begin{ytableau}
	7 & 8\\
	4 & 6\\
	3 & 5 & 7 \\
	2 & 2 & 5 &6
\end{ytableau} $P(\bar{u})=$ \begin{ytableau}
	8\\
	7 & 7\\
	6 & 6\\
\end{ytableau}
\end{center}
and the maximum length of a weakly increasing subsequence of $u$ over alphabet $[6,8]$ is $2$. Next, consider the action of $u$ on the bottom row containing exactly three $6$'s, which we identify with its row word $666$. To make the calculation transparent, we factorise $u$ into its row words, splitting each into two factors: symbols strictly less than $p$ and symbols greater than or equal to $p$, writing $u= 78 \, 4\, 6\, 35\, 7\, 225\, 6$. It is then easy to see that 
$$666 \rightarrow 666\, 78 \rightarrow 466\, 78 \rightarrow 466\, 68 \rightarrow 356\, 68 \rightarrow 356\, 67 \rightarrow 225\, 67 \rightarrow 225\, 66.$$
Notice that $225$ coincides with the symbols strictly less than $p=6$ from the bottom row of $P(u)$ and $66$
coincides with the bottom row of $P (\bar{u})$.
\end{example}

\begin{remark}
\label{rem:bottomrow}
Consider the notation of Remark \ref{rem:contraction}. The calculation in Example \ref{example:store} can be generalised to show that for any $u \in [n]^+$ and any $1 \leq p \leq n$, the action of $u$ on the bottom row consisting of $N:=\sum_{i=1}^{p-1}\beta(u,i)$ symbols $p$ may be computed by first inserting (using Schensted's row insertion algorithm) the symbols from $u$ that are strictly less than $p$ and then  acting by $\bar{u}$ on this bottom row. Note that each symbol less than $p$ will only bump symbols one of the $N$ symbols $p$. Thus this action can be reconstructed from knowledge of the bottom row of $P(\bar{u})$ together with the symbols in the bottom row of $P(u)$ that are strictly less than $p$.
\end{remark}

\begin{remark}
\label{rem:toprow}
Let $w, u \in [n]^*$. Since $\varphi_n$ is a morphism, $\varphi_n(wu) = \varphi_n(w) \otimes \varphi_n(u)$. The top row $T(wu)$ of $\varphi_n(wu)$ is then given by
$$T(wu) = (T(w) \otimes C_1(u), \ldots, T(w) \otimes C_n(u)),$$
where $C_i(u)$ denotes the $i$th column of $\varphi_n(u)$.
\end{remark}

For any word $u \in [n]^*$, the top row $T(u)$ of the matrix $\varphi_n(u)$ contains no $-\infty$ entries, and may therefore be viewed as a rational vector. For a rational $n \times n$ matrix $X$ one can then form the (usual) matrix product $T(u)X$. In the following we write $e_i$ to denote the $i$th standard basis vector of $\mathbb{Q}^n$. 
\begin{lemma}
\label{lem:botrow}
Let $u \in [n]^+$. Then $B(u) = T(u)X$, where $X$ is the matrix with first column $e_1$ and with $i$th column equal to $e_i-e_{i-1}$, for $i\geq 2$.
\end{lemma}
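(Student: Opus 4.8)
The plan is to reduce the asserted matrix identity to a single scalar identity, coordinate by coordinate, and then to recognise that identity as a cumulative-count statement already essentially recorded in Remark \ref{rem:contraction}.

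First I would unpack the right-hand side. Since the first column of $X$ is $e_1$ and its $j$th column is $e_j - e_{j-1}$ for $j \geq 2$, the matrix $X$ is upper triangular with $1$'s on the diagonal, $-1$'s on the superdiagonal, and zeros elsewhere. The product of the row vector $T(u) = (u_{1,1}, \ldots, u_{1,n})$ with $X$ is then immediate: the first coordinate of $T(u)X$ is $u_{1,1}$, while for $j \geq 2$ the $j$th coordinate telescopes to $u_{1,j} - u_{1,j-1}$. Hence the claim $B(u) = T(u)X$ is equivalent to the pair of scalar identities $\beta(u,1) = u_{1,1}$ and $\beta(u,j) = u_{1,j} - u_{1,j-1}$ for $2 \leq j \leq n$.

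Next I would establish the cumulative form $u_{1,q} = \sum_{i=1}^{q} \beta(u,i)$ for each $q \in [n]$; the two displayed scalar identities then follow by taking successive differences (and reading off the $q=1$ case directly). To prove the cumulative form I would apply Remark \ref{rem:contraction} in the special case $p = 1$: there $\bar{u} = u$, since no symbols are deleted, and $P(\bar{u}) = P(u)$, so the remark yields that $u_{1,q}$ equals the number of entries in the bottom row of $P(u)$ lying in the range $[1,q]$. As $\beta(u,i)$ counts the $i$'s in that bottom row, this number is exactly $\sum_{i=1}^{q}\beta(u,i)$, as needed.

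The only genuine content here is the cumulative identity $u_{1,q} = \sum_{i \leq q}\beta(u,i)$, namely that the longest weakly increasing subsequence of $u$ on the alphabet $\{1,\ldots,q\}$ has length equal to the number of bottom-row cells of $P(u)$ with value at most $q$; everything else is bookkeeping with the telescoping matrix $X$. Granting Remark \ref{rem:contraction}, there is therefore no real obstacle. Were one to argue from scratch instead of invoking that remark, the point requiring care would be precisely this combinatorial fact — a single-row (weak) instance of Schensted's theorem, applied to the subword of $u$ consisting of the letters $\leq q$ — together with the verification that passing to that subword does not alter the count of bottom-row entries $\leq q$.
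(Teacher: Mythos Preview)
Your proof is correct and follows essentially the same route as the paper: both reduce the matrix identity to the scalar telescoping identity $u_{1,q} = \sum_{i\leq q}\beta(u,i)$ and then invoke the Schensted-type fact that $u_{1,q}$ counts bottom-row entries of $P(u)$ at most $q$. The only cosmetic difference is that you extract this fact from Remark~\ref{rem:contraction} with $p=1$, whereas the paper first replaces $u$ by its row reading (using plactic invariance of $\varphi_n$) and then cites \cite[Lemma~2.8]{JK}.
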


\begin{proof}
Since $\varphi_n$ is constant on $\equiv_{\rm plac}$-classes, we may assume without loss of generality that $u$ is the row reading of $P(u)$. By definition, $u_{1,i}$ is the maximal length of a weakly increasing subsequence of $u$ with entries in $\{1, \ldots, i\}$. Since $u$ is the row word of $ P(u)$, it is easy to see that $u_{1,i}$ is equal to the number of entries less than or equal to $i$ in the bottom row of $P(u)$ -- see \cite[Lemma 2.8]{JK} for a short proof. The number of $1$'s in the bottom row is therefore $u_{1,1}$, whilst for all $ i \geq 2$ we have $u_{1,i} - u_{1,i-1}$ is the number of $i$'s in the bottom row of $P(u)$. The result now follows.
\end{proof}

\begin{thm}
\label{thm:sequences}
Let $u,v \in [n]^*$. Then $u \equiv_{\rm gram} v$ if and only if $\varphi_n(u) = \varphi_n(v)$. (In other words, if and only if for all sub-intervals $[p,q]$ of $[n]$, the maximum length of a weakly increasing subsequence of $u$ over alphabet $[p,q]$ is equal to  the maximum length of a weakly increasing subsequence of $v$ over alphabet $[p,q]$.)
\end{thm}
\begin{proof}
Suppose that $\varphi_n(u) = \varphi_n(v)$. It follows from the description of $\varphi_n$ that if either of $u$ or $v$ is equal to the empty word, then $u=v=\varepsilon$. Thus we may assume that $u, v \in [n]^+$.  Let $\gamma \in \mathbb{N}_0^n$ and let $w = 1^{\gamma_1}2^{\gamma_2} \cdots n^{\gamma_n}$. Then by Remark \ref{rem:action}, Lemma \ref{lem:botrow}, Remark \ref{rem:toprow} together with the fact that $C_i(u) = C_i(v)$, for all $i$, we have: 
\begin{eqnarray*}
\gamma \cdot u &=& B(wu) = T(wu)X = (T(w) \otimes C_1(u), \ldots, T(w) \otimes C_n(u))X\\
&=& (T(w) \otimes C_1(v), \ldots, T(w) \otimes C_n(v))X =  T(wv)X = B(wv) = \gamma \cdot v.\end{eqnarray*}
Since $\gamma$ was arbitrary, this shows that $u \equiv_{\rm gram} v$.

Suppose that $u \equiv_{\rm gram} v$.  If either of $u$ or $v$ is equal to the empty word, then it is immediate that $u=v=\varepsilon$. Thus we may assume that $u,v \in [n]^+$. Fix $p \in \{1, \ldots, n\}$ and let $\bar{u}$ and $\bar{v}$ denote the words obtained from $u$ and, respectively, $v$ by deleting all occurrences of the symbols $1, \ldots, p-1$. Now, since $u \equiv_{\rm gram} v$ we know that $P(u)$ and $P(v)$ have the same bottom row and hence in particular, the number of symbols from $1, \ldots, p-1$ in the bottom row of $P(u)$ is equal to the number of symbols from $1, \ldots, p-1$ in the bottom row of $P(v)$; call this number $n_p$.  Let $\delta \in \mathbb{N}_0^n$ be the row of length $n_p$ containing only the symbol $p$ and consider the action of $u$ and $v$ on $\delta$. It follows from the definition of the action that any symbol strictly less than $p$ will bump one of the first $n_p$ symbols in this row, whilst each symbol of $\bar{u}$ (which is by definition greater than or equal to $p$) will be inserted to the right of the first $n_p$ symbols as though inserting $\bar{u}$ into the empty tableau (see Remark \ref{rem:bottomrow}). Thus $\delta \cdot u$ is equal to the row formed as the concatenation of the first $n_p$ symbols of $P(u)$ followed by the bottom row of $P(\bar{u})$. Likewise, $\delta \cdot v$ is equal to the row formed as the concatenation of the first $n_p$ symbols of $P(v)$ followed by the bottom row of $P(\bar{v})$. Since $u \equiv_{\rm gram} v$, we deduce from $\delta \cdot u = \delta\cdot v$ that the bottom rows of $P(\bar{u})$ and $P(\bar{v})$ are equal. Thus by Remark \ref{rem:contraction}, for all $q \geq p$ we have 
$$u_{p,q} = \bar{u}_{p,q} =  \bar{v}_{p,q} = v_{p,q}.$$
Since this statement holds for all $1 \leq p \leq q$, we have $\varphi_n(u) = \varphi_n(v)$.
\end{proof}

\begin{remark}
Theorem \ref{thm:sequences} states that $\varphi_n$ restricts to give a faithful representation of the grammic monoid of rank $n$ by $n \times n$ upper triangular tropical matrices.
\end{remark}

\begin{remark}
The description of the relation $\equiv_{\rm gram}$ given by Theorem \ref{thm:sequences} makes clear that the grammic congruence coincides with the 
`cloaktic congruence' of Izhakian \cite{I2} where a linear (in the length of the input word $u \in [n]^*$, treating the size of the alphabet as constant) time    algorithm \cite[Algorithm 5.13]{I2} is provided to compute $\varphi_n(u)$.

It also follows from \cite[Proposition 1]{C} that the word problem for the grammic monoid of any finite rank is decidable; Theorem \ref{thm:sequences} and the discussion above gives an alternative proof of this fact.
\end{remark}

\section{Semigroup varieties generated by a single finite rank grammic monoid}

A semigroup identity on alphabet $\Sigma$ is a pair of words, written $u = v$, in the
free semigroup $\Sigma^+$. The identity $u=v$ is said to be non-trivial if $u$ and $v$ are distinct words, and balanced if $|u|_a=|v|_a$ for all $a \in \Sigma$. For a balanced identity $u=v$, we in particular have that $|u|=|v|$; we call this common value the length of the identity. We say that $u=v$ holds in a semigroup $S$
(or $S$ satisfies the identity) if every morphism from $\Sigma^+$ to $S$ maps $u$ and $v$ to the
same element of $S$. The variety  generated by a semigroup $S$ is
the class of semigroups that satisfy all the semigroup identities satisfied by $S$.

We recall from the introduction that for each $n \geq 1$ we write $\mathbf{V}_n$ to denote the variety of semigroups generated by the monoid of $n \times n$ upper triangular tropical matrices. Combining Theorem \ref{thm:sequences} with the results of \cite{JK} now yields:

\begin{corollary}\label{cor:identities}
The grammic monoid of rank $n$ satisfies exactly the same identities as the monoid of $n \times n$ upper triangular tropical matrices. In other words: the variety of semigroups generated by the grammic monoid of rank $n$ is $\mathbf{V}_n$.
\end{corollary}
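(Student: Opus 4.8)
The plan is to deduce Corollary \ref{cor:identities} by combining Theorem \ref{thm:sequences} with the already-cited results of Johnson and Kambites \cite{JK}. The key observation is that Theorem \ref{thm:sequences} asserts precisely that $\varphi_n$ descends to an \emph{injective} morphism $\overline{\varphi_n}: \mathbb{G}_n \to {\rm UT}_n(\mathbb{T})$, since $u \equiv_{\rm gram} v$ if and only if $\varphi_n(u) = \varphi_n(v)$. Thus $\mathbb{G}_n$ is isomorphic to the image $\varphi_n([n]^*)$, a submonoid of ${\rm UT}_n(\mathbb{T})$.

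First I would record the trivial inclusion: since $\mathbb{G}_n$ embeds in ${\rm UT}_n(\mathbb{T})$, every semigroup identity satisfied by ${\rm UT}_n(\mathbb{T})$ is inherited by its subsemigroup $\mathbb{G}_n$, and hence the variety generated by $\mathbb{G}_n$ is contained in $\mathbf{V}_n$. The substance of the corollary is the reverse inclusion, namely that every identity satisfied by $\mathbb{G}_n$ is already satisfied by ${\rm UT}_n(\mathbb{T})$. For this I would invoke the main result of \cite{JK}, which characterises the identities of ${\rm UT}_n(\mathbb{T})$ in terms of the identities satisfied by the plactic monoid's tropical image; concretely, \cite{JK} shows that the variety $\mathbf{V}_n$ is generated by the image of a representation built on exactly the same subsequence-counting data that defines $\varphi_n$. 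Since $\mathbb{G}_n \cong \varphi_n([n]^*)$ and $\varphi_n$ is the very morphism underlying the Johnson--Kambites analysis, any identity holding in $\mathbb{G}_n$ holds throughout $\varphi_n([n]^*)$ and therefore, by the results of \cite{JK}, throughout ${\rm UT}_n(\mathbb{T})$.

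Combining the two inclusions gives equality of the identity sets, which is exactly the statement that $\mathbb{G}_n$ and ${\rm UT}_n(\mathbb{T})$ generate the same variety $\mathbf{V}_n$. The bulk of the argument is genuinely formal once Theorem \ref{thm:sequences} is in hand: the corollary is essentially a dictionary translation between ``faithful representation'' and ``same identities.'' The one point requiring care is the reverse inclusion, where one must confirm that the results of \cite{JK} indeed pin down the identities of ${\rm UT}_n(\mathbb{T})$ via this subsequence representation rather than merely exhibiting \emph{some} faithful representation; I expect this to be the only nontrivial step, and it is resolved entirely by citation to the established theorems of \cite{JK} rather than by any new computation here.
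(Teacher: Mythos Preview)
Your proposal is correct and follows essentially the same approach as the paper: both directions are handled exactly as you describe, with the faithful embedding from Theorem~\ref{thm:sequences} giving one inclusion and the proof of \cite[Theorem 4.4]{JK} (showing that any identity falsifiable in ${\rm UT}_n(\mathbb{T})$ is already falsifiable in the image of $\varphi_n$) giving the other. You have also correctly identified the one point requiring care---that the reverse inclusion genuinely depends on \cite{JK} showing something specific about $\varphi_n$ rather than merely exhibiting some faithful representation---and the paper resolves it by exactly the citation you anticipate.
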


\begin{proof}
Since the grammic monoid of rank $n$ has a faithful representation by $n \times n$ upper triangular tropical matrices, it is immediate that every identity satisfied by ${\rm UT}_n(\mathbb{T})$ is also satisfied by the grammic monoid of rank $n$.

For the converse, we note that the proof of \cite[Theorem 4.4]{JK} demonstrates that if an identity can be falsified by matrices in ${\rm UT}_n(\mathbb{T})$, then it can be falsified by matrices lying in the image of $\varphi_n$. Since $\varphi_n$ is a faithful representation of the grammic monoid, we have that the identity also does not hold in the grammic monoid.
\end{proof}

\begin{remark}
\label{rem:Volkov}
We note that \cite[Theorem 1]{V} states that the grammic monoid of rank $3$ satisfies the same semigroup identities as the plactic monoid of rank 3 (and also as the quotient of the plactic monoid of rank $3$ factored by the least congruence identifying $13$ and $31$). Thus Corollary  \ref{cor:identities} may be viewed as a generalisation of \cite[Theorem 1]{V}. 
\end{remark}

As mentioned in the introduction, the varieties $\mathbf{V}_n$ have received much attention. Corollary \ref{cor:identities} together with the results of \cite{AR}, \cite{A}, \cite{AR3}, \cite{DJK} and \cite{JF} yields the following immediate consequences:

\begin{corollary}
The variety generated by the stylic monoid of rank $n$ is a proper subvariety of the variety generated by the grammic monoid of rank $n+1$. Consequently, for all $n \geq 4$ the variety of the grammic monoid of rank $n$ has uncountably many subvarieties. 
\end{corollary}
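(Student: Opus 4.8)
The plan is to translate the statement entirely into the language of the varieties $\mathbf{J}_m$ and $\mathbf{V}_m$ and then reduce the first assertion to a single separating identity. First I would recall from \cite{AR3} that the stylic monoid of rank $n$ generates $\mathbf{J}_n$, the variety generated by the monoid of $(n+1)\times(n+1)$ uni-triangular tropical matrices (that is, upper triangular matrices all of whose diagonal entries equal $0$), and from Corollary \ref{cor:identities} that the grammic monoid of rank $n+1$ generates $\mathbf{V}_{n+1}$, the variety generated by ${\rm UT}_{n+1}(\mathbb{T})$. Since the uni-triangular matrices form a submonoid of ${\rm UT}_{n+1}(\mathbb{T})$, every identity satisfied by ${\rm UT}_{n+1}(\mathbb{T})$ is satisfied by the uni-triangular matrices, which yields the inclusion $\mathbf{J}_n \subseteq \mathbf{V}_{n+1}$ immediately.

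The substantive point is that this inclusion is proper, and I would establish it by exhibiting a single identity that separates the two varieties; the natural candidate is $x^n = x^{n+1}$. To see that it holds in $\mathbf{J}_n$, I would argue that for any uni-triangular $A \in {\rm UT}_{n+1}(\mathbb{T})$ the $(i,j)$ entry of $A^k$ (for $i \le j$) is the maximum weight of a weakly increasing length-$k$ chain of indices from $i$ to $j$, in which diagonal steps contribute weight $0$; since any strictly increasing chain from $i$ to $j$ uses at most $j - i \le n$ genuine ascents, padding with zero-weight diagonal steps shows that $A^k$ stabilises for $k \ge n$, whence $A^n = A^{n+1}$. Conversely, $x^n = x^{n+1}$ fails in $\mathbf{V}_{n+1}$: the diagonal matrix $A$ with $A_{1,1} = 1$ and all other diagonal entries $0$ lies in ${\rm UT}_{n+1}(\mathbb{T})$ and satisfies $(A^k)_{1,1} = k$, so that $A^n \ne A^{n+1}$. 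Conceptually, $\mathbf{V}_{n+1}$ contains a copy of the free monogenic semigroup $(\mathbb{N},+)$ via this diagonal entry and hence is not aperiodic, whereas $\mathbf{J}_n$ is; this proves $\mathbf{J}_n \subsetneq \mathbf{V}_{n+1}$, the first assertion.

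For the consequence I would shift the index and combine the first part with the known abundance of subvarieties of $\mathbf{J}_m$. Taking $n \ge 4$ and applying the first assertion with $n$ replaced by $n-1$ gives $\mathbf{J}_{n-1} \subsetneq \mathbf{V}_n$, so that $\mathbf{J}_{n-1}$ is a subvariety of $\mathbf{V}_n$ and every subvariety of $\mathbf{J}_{n-1}$ is in particular a subvariety of $\mathbf{V}_n$. Since $n - 1 \ge 3$, I would then invoke the result (see \cite{JF}, and also \cite{A}) that $\mathbf{J}_{n-1}$, equivalently the variety generated by the stylic monoid of rank $n-1$, already has uncountably many subvarieties. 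Because $\mathbf{V}_n$ is the variety generated by the grammic monoid of rank $n$ by Corollary \ref{cor:identities}, it follows that this latter variety contains uncountably many subvarieties, completing the proof.

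The main obstacle I anticipate is not the inclusion but the verification of the separating identity: one must check that $x^n = x^{n+1}$, and not some other exponent, is genuinely correct, which forces a precise accounting of the longest strictly increasing chain of indices in an $(n+1)\times(n+1)$ matrix. The reduction of the ``uncountably many subvarieties'' clause to the cited result is then routine once the index is matched, the only point requiring care being that the threshold $n \ge 4$ is exactly what is needed for $\mathbf{J}_{n-1}$ to fall in the range where uncountably many subvarieties are known.
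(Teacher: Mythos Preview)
Your approach is essentially the paper's: translate to $\mathbf{J}_n$ and $\mathbf{V}_{n+1}$ via \cite{AR3} and Corollary~\ref{cor:identities}, obtain the inclusion from the uni-triangular submonoid sitting inside ${\rm UT}_{n+1}(\mathbb{T})$, and then invoke the known uncountability of subvarieties of $\mathbf{J}_m$ for $m\ge 3$. You actually go a step further than the paper by explicitly verifying properness with the separating identity $x^n=x^{n+1}$ (aperiodicity of $\mathbf{J}_n$ versus the free monogenic subsemigroup on the diagonal of ${\rm UT}_{n+1}(\mathbb{T})$); the paper's proof asserts properness in the statement but does not spell this out. One correction: the uncountably-many-subvarieties result for the stylic monoid of rank $m\ge 3$ is \cite[Corollary~4.4]{AR3}, not \cite{JF} or \cite{A}; neither of the latter contains this statement.
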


\begin{proof}
By \cite[Corollary 4.2]{AR3} and \cite[Corollary 3.4]{JF}, the stylic monoid and the monoid of uni-triangular $(n+1) \times (n+1)$ tropical matrices each generate the same variety $\mathbf{J}_n$. Since the uni-triangular matrices form a subsemigroup of the upper triangular matrices of the same size, it follows from Corollary~\ref{cor:identities} that this is a subvariety of the variety generated by the grammic monoid of rank $n+1$. By \cite[Corollary 4.4]{AR3}, the variety generated by the stylic monoid of rank $n$ has uncountably many subvarieties for $n \geq 3$.
\end{proof}

\begin{corollary}
\label{cor:identitychecking}
Let $n$ be a fixed positive integer. There is an algorithm which,
given an identity $v = w$ over a finite alphabet $\Sigma$, decides in time polynomial in $|v| + |w|$
and $|\Sigma|$ whether the identity holds in the grammic monoid of rank $n$.
\end{corollary}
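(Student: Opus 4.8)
The plan is to pass through the tropical representation and reduce the problem to a finite, polynomially-sized convex-geometric comparison. By Corollary~\ref{cor:identities} the identity $v = w$ holds in $\mathbb{G}_n$ if and only if it holds in ${\rm UT}_n(\mathbb{T})$, so it suffices to give a polynomial-time test for the latter. Writing $v = v_1 \cdots v_m$ with each $v_t \in \Sigma$ and substituting $A_x \in {\rm UT}_n(\mathbb{T})$ for each letter $x \in \Sigma$, the $(i,j)$ entry of the tropical product $A_{v_1} \otimes \cdots \otimes A_{v_m}$ is
\[
\max_{i = \ell_0 \le \ell_1 \le \cdots \le \ell_m = j} \ \sum_{t=1}^m (A_{v_t})_{\ell_{t-1},\ell_t},
\]
the maximum being over weakly increasing sequences in $[n]$ precisely because the matrices are upper triangular. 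To each such sequence $\ell$ I would associate its \emph{content vector} $c(\ell) \in \mathbb{N}_0^{\,\Sigma \times [n] \times [n]}$, recording for each triple $(x,a,b)$ the number of steps $t$ with $v_t = x$, $\ell_{t-1} = a$ and $\ell_t = b$; the displayed entry is then $\max_{\ell}\langle c(\ell), A\rangle$, where $A$ denotes the real vector of matrix entries. After the routine reduction to matrices with finite (rather than $-\infty$) entries, this makes each entry a support function of the finite set $V_{ij}$ of content vectors of $v$-sequences from $i$ to $j$, and two such functions agree on all substitutions if and only if the convex hulls $\mathrm{conv}(V_{ij})$ and $\mathrm{conv}(W_{ij})$ coincide. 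Hence $v = w$ holds in $\mathbb{G}_n$ if and only if $\mathrm{conv}(V_{ij}) = \mathrm{conv}(W_{ij})$ for every $1 \le i \le j \le n$.

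The crucial point that makes this polynomial is that, for \emph{fixed} $n$, the number of weakly increasing sequences $i = \ell_0 \le \cdots \le \ell_m = j$ in $[n]$ is at most $\binom{m+n}{n-1} = O(m^{\,n-1})$, a polynomial in the word length. Thus the sets $V_{ij}$ and $W_{ij}$ can be listed explicitly, each of polynomial cardinality and each content vector computable in linear time. Comparing the convex hulls then reduces to deciding, for each of the polynomially many points $c \in V_{ij}$, whether $c \in \mathrm{conv}(W_{ij})$ (and symmetrically); each such membership question is a single linear-programming feasibility problem of size polynomial in $|v| + |w|$ and $|\Sigma|$. Running over the constantly many pairs $(i,j)$ and reporting that the identity holds precisely when all these hulls agree yields the required algorithm, with total running time polynomial in $|v| + |w|$ and $|\Sigma|$ for each fixed $n$.

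The main obstacle is exactly the polynomial enumerability of the relevant subsequences: a priori the entries of $A_{v_1} \otimes \cdots \otimes A_{v_m}$ are tropical polynomials with exponentially many monomials, and deciding equality of two such functions from an evaluation oracle alone is not obviously tractable. The resolution is the observation above that for fixed $n$ only polynomially many weakly increasing level-sequences occur, so the exponential blow-up of monomials never materialises and the Newton polytopes $\mathrm{conv}(V_{ij})$, $\mathrm{conv}(W_{ij})$ admit polynomially many explicitly enumerable generators. The remaining points are routine: the passage from ${\rm UT}_n(\mathbb{T})$ to finite-entry matrices needs only a short density and continuity argument, and one must be careful to test convex-hull equality rather than mere equality of the content-vector sets, since distinct words may produce different sets with the same hull.
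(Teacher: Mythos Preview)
Your argument is correct. The paper takes the same first step---reducing via Corollary~\ref{cor:identities} to identity-checking in ${\rm UT}_n(\mathbb{T})$---and then simply invokes \cite[Theorem~8.3]{DJK}, which already establishes polynomial-time decidability of identities in ${\rm UT}_n(\mathbb{T})$; the entire proof in the paper is that one-line citation. What you have written is effectively a self-contained reconstruction of the cited result: the enumeration of the (for fixed $n$) polynomially many weakly increasing level-sequences, the passage to content vectors, and the reduction of equality of tropical polynomials to equality of Newton polytopes checked by linear programming are essentially the mechanism underlying \cite[Theorem~8.3]{DJK}. Your version has the merit of being explicit and independent of the external reference; the paper's has the merit of brevity. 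Nothing in your write-up needs correction, though you might flag that the density/continuity reduction to finite-entry substitutions and the support-function criterion for polytope equality are asserted rather than proved in full---both are indeed routine.
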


\begin{proof}
This follows immediately from Corollary \ref{cor:identities} and \cite[Theorem 8.3]{DJK}.
\end{proof}

\begin{corollary}
\label{cor:varieties}
Let $m, n$ be positive integers. The variety of semigroups generated by the grammic monoid of rank $n$ coincides with the variety of semigroups generated by the grammic monoid of rank $m$ if and only if $m=n$.
\end{corollary}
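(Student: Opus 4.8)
The plan is to reduce everything to a statement about the varieties $\mathbf{V}_n$ and then to establish that the chain $\mathbf{V}_1, \mathbf{V}_2, \ldots$ is \emph{strictly} increasing. Since two semigroups generate the same variety precisely when they satisfy the same identities, Corollary \ref{cor:identities} tells us that the grammic monoid of rank $n$ generates $\mathbf{V}_n$, so the assertion is equivalent to: $\mathbf{V}_m = \mathbf{V}_n$ if and only if $m=n$. The backward implication is immediate, so the whole content lies in the forward implication. For this it suffices to show that the chain $(\mathbf{V}_n)_{n \geq 1}$ is strictly increasing, since then $\mathbf{V}_m = \mathbf{V}_n$ forces $m=n$.

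First I would record the easy inclusion $\mathbf{V}_n \subseteq \mathbf{V}_{n+1}$. The map sending an $n \times n$ upper triangular tropical matrix $A$ to the $(n+1)\times(n+1)$ matrix $\mathrm{diag}(A,0)$ — with $A$ in the top-left block, the tropical identity $0$ in position $(n+1,n+1)$, and $-\infty$ in all remaining entries — is an injective monoid homomorphism ${\rm UT}_n(\mathbb{T}) \hookrightarrow {\rm UT}_{n+1}(\mathbb{T})$, since $\mathrm{diag}(A,0)\,\mathrm{diag}(B,0) = \mathrm{diag}(AB,0)$. As a subsemigroup generates a subvariety, this yields $\mathbf{V}_n \subseteq \mathbf{V}_{n+1}$. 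The problem therefore reduces to exhibiting, for each $n$, a semigroup identity that holds in ${\rm UT}_n(\mathbb{T})$ but fails in ${\rm UT}_{n+1}(\mathbb{T})$; equivalently, via Corollary \ref{cor:identities}, one that holds in $\mathbb{G}_n$ but fails in $\mathbb{G}_{n+1}$.

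To produce such separating identities I would exploit the best-path description of tropical matrix multiplication: the $(p,q)$ entry of a product of matrices in ${\rm UT}_N(\mathbb{T})$ is the maximum, over all monotone index sequences $p = i_0 \leq i_1 \leq \cdots \leq i_L = q$, of the corresponding sum of matrix entries. Because there are only $N$ levels, every optimal path makes at most $N-1$ strict ascents, so an identity of ${\rm UT}_N(\mathbb{T})$ can only detect combinatorial data visible through at most $N-1$ ascents. The strategy is thus to construct a family of word-pairs $u_n = v_n$ that agree under every substitution whose optimal paths use at most $n-1$ ascents — forcing the identity to hold in ${\rm UT}_n(\mathbb{T})$ — yet are separated by an explicit substitution into ${\rm UT}_{n+1}(\mathbb{T})$ that realises an optimal path with $n$ ascents. (For $n=1$ this is just commutativity $xy=yx$, which separates $\mathbf{V}_1$ from $\mathbf{V}_2$.) Alternatively, one could appeal to any independently-established strictness of the chain $(\mathbf{V}_n)_{n \geq 1}$.

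The main obstacle is the explicit construction and verification of the separating family: one must simultaneously prove that $u_n = v_n$ holds under \emph{all} substitutions into ${\rm UT}_n(\mathbb{T})$ and produce an explicit falsifying substitution in rank $n+1$. Both halves are most cleanly handled through the combinatorial characterisation of identities of ${\rm UT}_N(\mathbb{T})$ underlying the identity-checking algorithm of Corollary \ref{cor:identitychecking} and \cite{DJK}; the validity half (a uniform argument over all substitutions) is the delicate direction, whereas the falsification half reduces to evaluating a single product. Once the separating family is in hand, strictness of the chain, and hence the corollary, follows at once.
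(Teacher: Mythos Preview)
Your reduction to the strictness of the chain $(\mathbf{V}_n)_{n\ge 1}$ via Corollary~\ref{cor:identities} is exactly the paper's approach, and your ``alternative'' of invoking an independently established strictness result is precisely what the paper does: it cites \cite[Theorem~3.4]{A}, which supplies, for each $n<m$, an identity satisfied by ${\rm UT}_n(\mathbb{T})$ but not by ${\rm UT}_m(\mathbb{T})$, making the entire proof three lines. Your proposed direct construction of separating identities $u_n=v_n$ is therefore unnecessary here; it is a viable route, but the ``validity half'' you flag as delicate is genuinely so and is essentially the content of the cited result (and of related work such as \cite{DJK,I,O}), so there is no gain in reproving it.
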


\begin{proof}
Suppose that $n<m$. By \cite[Theorem 3.4]{A},  there exists an identity satisfied by ${\rm UT}_n(\mathbb{T})$ (and hence by the grammic monoid of rank $n$) that is not satisfied by ${\rm UT}_m(\mathbb{T})$ (and hence not satisfied by  the grammic monoid of rank $m$). 
\end{proof}

\begin{corollary}\label{prop:increasingidentities}
The identities of each finite rank grammic monoid are balanced, and the grammic monoid of rank $n$ does not satisfy any non-trivial semigroup identity of length less than or equal to $n-1$.
\end{corollary}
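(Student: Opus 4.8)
The plan is to prove the two assertions separately, establishing balancedness first and then the length bound.

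For balancedness, I would exploit the fact that powers of a single generator are already separated by $\varphi_n$. Given an identity $\mathbf{u}=\mathbf{v}$ over a variable alphabet $\Sigma=\{x_1,\dots,x_m\}$ satisfied by $\mathbb{G}_n$, I substitute $x_i\mapsto 1^{d_i}$ for arbitrary positive integers $d_i$. Each side then maps to a power of the generator $1$, and since $(1^k)_{1,1}=k$ distinguishes these powers, the identity forces $\sum_i|\mathbf{u}|_{x_i}d_i=\sum_i|\mathbf{v}|_{x_i}d_i$ for all choices of $d_i\ge 1$. As this is a linear relation in the $d_i$ vanishing on every positive integer tuple, its coefficients vanish, giving $|\mathbf{u}|_{x_i}=|\mathbf{v}|_{x_i}$ for each $i$; hence the identity is balanced. (Equivalently one may compose any substitution with the content homomorphism $\mathbb{G}_n\to\mathbb{N}_0^n$ arising from \cite[Lemma 1]{C}.)

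For the length bound, suppose $\mathbf{u}=\mathbf{v}$ is a non-trivial identity of length $\ell\le n-1$. By the first part it is balanced, so $\mathbf{u}\ne\mathbf{v}$ have the same content and common length $\ell$, and by Theorem \ref{thm:sequences} it suffices to exhibit a substitution $\phi$ into $\mathbb{G}_n$ for which $\phi(\mathbf{u})$ and $\phi(\mathbf{v})$ differ in some statistic $(\cdot)_{p,q}$. I would first treat the \emph{multilinear} case, in which every variable occurs exactly once (so there are $\ell$ distinct variables). Here I choose the injective assignment $\sigma$ sending the variables to $1,2,\dots,\ell\in[n]$ (possible since $\ell\le n$) in such a way that $\sigma(\mathbf{u})=12\cdots\ell$. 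This word is the unique arrangement of its content whose longest weakly increasing subsequence attains the full length $\ell$, so $\sigma(\mathbf{v})$, being a genuinely different rearrangement, satisfies $\sigma(\mathbf{v})_{1,n}<\ell=\sigma(\mathbf{u})_{1,n}$, and the two images are grammic-distinct. This already rules out every non-trivial multilinear identity of length at most $n$. To reduce the general case to a manageable one, I would then use the elementary fact that a word is determined by its projections onto all two-letter subalphabets: comparing $\mathbf{u}$ and $\mathbf{v}$ at their first point of difference shows that some pair $\{a,b\}$ satisfies $\pi_{ab}(\mathbf{u})\ne\pi_{ab}(\mathbf{v})$, where $\pi_{ab}$ deletes all letters outside $\{a,b\}$. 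Substituting every variable other than $a,b$ to $\varepsilon$, an identity of $\mathbb{G}_n$ would entail the two-variable identity $\pi_{ab}(\mathbf{u})=\pi_{ab}(\mathbf{v})$ between distinct binary words of equal content and length at most $\ell\le n-1$, so it remains to separate two such binary words in $\mathbb{G}_n$.

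This binary step is where I expect the real difficulty to lie. Over two generators the grammic and plactic congruences coincide \cite[Corollary 1]{C}, and binary words of a fixed content collapse to very few classes, so a single-generator substitution cannot separate interleaved words such as $abba$ and $baab$; one is forced to send $a$ and $b$ to longer strictly increasing \emph{blocks}. The proposal is to choose such block images over $[n]$—with the hypothesis $\ell\le n-1$ guaranteeing a sufficient supply of generator values—so that the full-alphabet longest weakly increasing subsequence of one image strictly exceeds that of the other, whence Theorem \ref{thm:sequences} separates them. The crux, and the step I expect to require the most care, is the uniform construction of these blocks from the combinatorics of the two binary words together with the verification of the strict inequality; the bound $n-1$ should emerge precisely as the number of generator values these blocks consume. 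As an alternative to a direct combinatorial argument, one could instead invoke Corollary \ref{cor:identities} to transfer the statement to $\mathrm{UT}_n(\mathbb{T})$ and appeal to a known lower bound on the length of its non-trivial identities.
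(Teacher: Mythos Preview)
Your balancedness argument is correct and in fact more explicit than the paper's treatment, which states balancedness but does not prove it separately (it is immediate from Corollary~\ref{cor:identities}, since $(\mathbb{R},+)$ sits inside ${\rm UT}_n(\mathbb{T})$ as scalar matrices).

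For the length bound, your direct combinatorial route has a genuine gap: the binary step is not carried out, and the vague ``block substitution'' recipe cannot be expected to work without a real construction. Single-letter substitutions access only $\mathbb{G}_2$-information, and naive two-letter blocks already fail: with $a\mapsto 12$, $b\mapsto 34$ one checks that $\varphi_4(12343412)=\varphi_4(34121234)$, so $abba$ and $baab$ are not separated. A working substitution does exist (e.g.\ $a\mapsto 13$, $b\mapsto 2$ separates $abba$ from $baab$ already in $\mathbb{G}_3$ via the $(1,3)$-entry), but producing one uniformly from an arbitrary pair of distinct binary words, and explaining why $\ell\le n-1$ is precisely the threshold, is a separate argument you have not supplied. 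Your multilinear case and two-variable reduction are correct but, as you anticipate, they only relocate the difficulty.

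The paper takes exactly your ``alternative'' and makes it precise in two lines: by Corollary~\ref{cor:identities} the identity holds in ${\rm UT}_n(\mathbb{T})$, hence in its uni-triangular submonoid, and \cite[Corollary~3.3]{JF} then forces $u$ and $v$ to have the same scattered subwords of length at most $n-1$; since $|u|=|v|=\ell\le n-1$, each word is its own scattered subword, whence $u=v$. This renders the multilinear case, the projection to two variables, and the block-substitution analysis all unnecessary.
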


\begin{proof}
Suppose for contradiction that the grammic monoid of rank $n$ satisfies a non-trivial semigroup  identity $u=v$ of length $\ell \leq n-1$. Thus  $\ell =|u|=|v|$. By Corollary \ref{cor:identities}, the identity $u=v$ is satisfied by the monoid of $n \times n$ upper triangular tropical matrices, and hence also by the subsemigroup of $n \times n$ \emph{uni-triangular} tropical matrices. It then follows from \cite[Corollary 3.3]{JF} that $u$ and $v$ admit the same set
of scattered subwords of length at most $n-1$, and since $\ell < n$, this immediately forces $u=v$, giving a contradiction. 
\end{proof}

\begin{corollary}
\label{cor:infrank}
The infinite rank grammic monoid does not satisfy any non-trivial semigroup identity.
\end{corollary}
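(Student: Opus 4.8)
The plan is to derive the result from Corollary \ref{prop:increasingidentities} by showing that any semigroup identity holding in $\mathbb{G}_{\mathbb{N}}$ must hold in \emph{every} finite rank grammic monoid $\mathbb{G}_n$, and then to force a contradiction by taking $n$ large. Concretely, I would argue by contradiction: suppose that $\mathbb{G}_{\mathbb{N}}$ satisfies a non-trivial identity $u=v$ over a finite alphabet $\Sigma$, and aim to transfer this identity down to $\mathbb{G}_n$ for arbitrarily large $n$.

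The elementary observation underpinning the transfer is the following inclusion of congruences: if $w, w' \in [n]^* \subseteq \mathbb{N}^*$ satisfy $w \equiv_{\rm Gram} w'$, then $w \equiv_{\rm gram} w'$. Indeed, $w \equiv_{\rm Gram} w'$ means that $\gamma \cdot w = \gamma \cdot w'$ for every finite row $\gamma$ over $\mathbb{N}$, and restricting attention to those rows $\gamma$ that lie over $[n]$ gives precisely $w \equiv_{\rm gram} w'$. To transfer the identity I would take an arbitrary morphism $\phi \colon \Sigma^+ \to \mathbb{G}_n$, choose for each generator $a \in \Sigma$ a word $w_a \in [n]^*$ representing $\phi(a)$, and define a morphism $\psi \colon \Sigma^+ \to \mathbb{G}_{\mathbb{N}}$ by setting $\psi(a) = [w_a]_{\equiv_{\rm Gram}}$ (regarding each $w_a$ as a word over $\mathbb{N}$); as a map on free generators this extends uniquely to a morphism. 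Writing $U$ and $V$ for the words obtained from $u$ and $v$ by substituting $w_a$ for each letter $a$, we have $\phi(u)=[U]_{\equiv_{\rm gram}}$, $\phi(v)=[V]_{\equiv_{\rm gram}}$, $\psi(u)=[U]_{\equiv_{\rm Gram}}$ and $\psi(v)=[V]_{\equiv_{\rm Gram}}$, with $U,V \in [n]^*$. Since $u=v$ holds in $\mathbb{G}_{\mathbb{N}}$ we get $U \equiv_{\rm Gram} V$, whence by the observation above $U \equiv_{\rm gram} V$, and therefore $\phi(u)=\phi(v)$. As $\phi$ was arbitrary, the identity $u=v$ holds in $\mathbb{G}_n$.

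Finally I would conclude as follows. By the previous step, $u=v$ holds in $\mathbb{G}_n$ for every $n$. The first assertion of Corollary \ref{prop:increasingidentities} then forces $u=v$ to be balanced, so that it has a well-defined finite length $\ell = |u| = |v|$, with $\ell \geq 1$ since the identity is non-trivial. Choosing $n = \ell+1$, the monoid $\mathbb{G}_n$ now satisfies the non-trivial identity $u=v$ of length $\ell = n-1$, directly contradicting the second assertion of Corollary \ref{prop:increasingidentities}. Hence $\mathbb{G}_{\mathbb{N}}$ can satisfy no non-trivial semigroup identity.

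The one point requiring genuine care — which I would flag as the main obstacle — is the \emph{direction} of the congruence inclusion used in the transfer step. The tempting approach of regarding $\mathbb{G}_n$ as a submonoid of $\mathbb{G}_{\mathbb{N}}$ (so that identities pass automatically from $\mathbb{G}_{\mathbb{N}}$ to $\mathbb{G}_n$) would require the \emph{reverse} inclusion, namely that the restriction of $\equiv_{\rm gram}$ to $[n]^*$ is contained in $\equiv_{\rm Gram}$, equivalently that the alphabet inclusion $[n]^* \hookrightarrow \mathbb{N}^*$ induces a well-defined and injective morphism $\mathbb{G}_n \to \mathbb{G}_{\mathbb{N}}$; this is a genuinely substantive fact (it amounts to checking that $\equiv_{\rm gram}$-equivalent words over $[n]$ act identically even on rows containing symbols larger than $n$) and would itself demand proof. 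The lifting argument above deliberately sidesteps it, invoking only the trivial inclusion of $\equiv_{\rm Gram}\vert_{[n]^*}$ into $\equiv_{\rm gram}$, at the modest cost of working with chosen word representatives rather than a single structural embedding.
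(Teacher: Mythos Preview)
Your proof is correct, and it takes a genuinely different route from the paper's. The paper establishes the stronger structural statement that $\mathbb{G}_n$ is a \emph{submonoid} of $\mathbb{G}_{\mathbb{N}}$: it proves that for $u,v\in[n]^*$, the relation $u\equiv_{\rm gram}v$ implies $u\equiv_{\rm Gram}v$ by an induction passing from rows over $[n]$ to rows over $[n+1]$ (analysing how many copies of the symbol $n+1$ get bumped), and then invokes Corollary~\ref{prop:increasingidentities} via the submonoid embedding. Your argument deliberately avoids this harder direction, using only the trivial containment $\equiv_{\rm Gram}\!\upharpoonright_{[n]^*}\ \subseteq\ \equiv_{\rm gram}$ together with a lift of an arbitrary substitution into $\mathbb{G}_n$ to a substitution into $\mathbb{G}_{\mathbb{N}}$ via chosen word representatives. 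What your approach buys is economy: you get the corollary with no new computation beyond what Corollary~\ref{prop:increasingidentities} already provides. What the paper's approach buys is the embedding $\mathbb{G}_n\hookrightarrow\mathbb{G}_{\mathbb{N}}$ itself, a fact of independent interest that your argument neither uses nor yields. Your final paragraph correctly identifies exactly this distinction.
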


\begin{proof}
For each positive integer $n$, we show that the grammic monoid of rank $n$ is a submonoid of the infinite rank grammic monoid; the result then follows from Corollary \ref{prop:increasingidentities}. It suffices to show that $u,v \in [n]^*$ act the same on all bottom rows over alphabet $\mathbb{N}$ if and only if they act the same on all bottom rows over alphabet $[n]$. The forward direction is clear.

Suppose then that $\gamma \cdot u = \gamma \cdot v$ holds for all bottom rows $\gamma$ on alphabet $[n]$, and let $\delta \in \mathbb{N}_0^{n+1}$ be a bottom row over alphabet $[n+1]$. We show that $\delta \cdot u = \delta \cdot v$.  Let $\delta' \in \mathbb{N}_0^{n}$ be the restriction of $\delta$ to the first $n$ symbols. Setting $d= \sum_{i=1}^n (\delta'_i-(\delta' \cdot u)_i) = \sum_{i=1}^n (\delta'_i-(\delta' \cdot v)_i)$ note that the action of either $u$ or $v$ on $\delta'$ results in a row with $d$ more symbols. For $1 \leq i \leq n$ it is clear that $(\delta \cdot u)_i = (\delta' \cdot u)_i = (\delta' \cdot v)_i = (\delta \cdot v)_i$. Moreover, since $u, v \in [n]^*$, the action of these words can only decrease the number of entries $n+1$ in $\delta$ by at most $d$ and so
$$(\delta \cdot u)_{n+1} = (\delta \cdot v)_{n+1} = \begin{cases}
\delta_{n+1} - d & \mbox{ if } \delta_{n+1} \geq d\\
0 & \mbox{ else.}
    \end{cases}
$$
Thus $u$ and $v$ act the same on all bottom  rows over $[n+1]$. Since $u,v \in [n]^* \subseteq [n+1]^*$, we may then apply induction to obtain that $u$ and $v$ act the same on all bottom rows.
\end{proof}

\section{Compatibility properties}

Several important congruences on words are known to satisfy various compatibility properties; for example, the plactic congruence is known to be compatible with restriction to alphabet intervals, packing, standardisation, and Sch\"utzenberger involution (see \cite{LS} for details). Other monoids closely related to the plactic monoid are also known to satisfy these properties - see \cite{AR4} for further results.
In this section we show that the grammic congruence also satisfies these properties.

Let $\mathcal{A}$ be an ordered alphabet and $\mathcal{B}$ a subalphabet of $\mathcal{A}$. We say that $\mathcal{B}$ is a subinterval if for all $b_1, b_2 \in \mathcal{B}$ if $a \in \mathcal{A}$ with $b_1< a <b_2$, then $a \in \mathcal{B}$. For $u \in \mathcal{A}^*$ with support $\{a_1<\ldots <a_m\}$ and $k = |u|$ we write:
\begin{itemize}
\item ${\rm cont}(u)$ to denote the content of $u$, that is, the function mapping each $a \in \mathcal{A}$ to $|u|_a$;
\item $u_\mathcal{B}$ to denote the restriction of $u$ to the subalphabet $\mathcal{B}$, that is, the word obtained from $u$ be deleting each symbol not in $\mathcal{B}$;
\item ${\rm pack}(u)$ to denote the packed word of $u$ that is, the word in $[m]^*$ determined by replacing each $a_i$ by $i$; and
\item ${\rm std}(u)$ to denote the standardisation of $u$, that is, the permutation of $[k]$ determined by replacing for each $1 \leq i \leq m$ the $j$th instance of $a_i$ in $u$ (reading from left to right) by $j+\Sigma_{t=1}^{i-1}|u|_{a_{t}}$.
\end{itemize}
If $\mathcal{A} = \{1, \ldots, n\}$ for some natural number $n$, then we also write $u^*$ to denote the Sch\"{u}tzenberger involution of $u$, that is the word obtained from $u$ by replacing each letter $i$ by $n-i+1$ and then reversing the word. For a congruence $\equiv$  on $\mathcal{A}^*$, the factor monoid $\mathcal{A}^*/\equiv$ is then said to be:
\begin{itemize}
\item compatible with restriction to alphabet intervals if: $$\forall u,v \in \mathcal{A}^*, u\equiv v \Leftrightarrow u_\mathcal{I} \equiv v_\mathcal{I} \mbox{  for all subintervals }\mathcal{I};$$
\item compatible with packing if: 
$$\forall u,v \in \mathcal{A}^*, u\equiv v \Leftrightarrow  ({\rm pack}(u) \equiv {\rm pack}(v) \mbox{ and }{\rm cont}(u) = {\rm cont}(v));$$
\item compatible with standardisation if:$$ \forall u, v \in \mathcal{A}^*, u\equiv v \Leftrightarrow  ({\rm  std}(u) \equiv {\rm std}(v) \mbox{ and } {\rm cont}(u) = {\rm cont}(v));$$
\item compatible with the Sch\"{u}tzenberger involution if $\mathcal{A}$ is finite and: 
$$\forall u,v \in \mathcal{A}^*, u\equiv v \Leftrightarrow  u^*\equiv v^*.$$
\end{itemize}

\begin{prop}
The grammic monoid of rank $n$ is compatible with restriction to alphabet intervals, packing, standardisation,  and Sch\"utzenberger involution.
\end{prop}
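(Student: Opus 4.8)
The plan is to reduce everything to Theorem \ref{thm:sequences}, which identifies $\equiv_{\rm gram}$ with equality of the family of statistics $u_{p,q}$, and then to track how these statistics transform under each of the four operations. Throughout I will use the recalled fact from \cite{C} that $u \equiv_{\rm gram} v$ forces $|u|_a = |v|_a$ for every letter $a$, so that ${\rm cont}(u) = {\rm cont}(v)$ is automatic whenever $u \equiv_{\rm gram} v$; this disposes for free of the content conditions in the ``only if'' directions for packing and standardisation. For restriction to an alphabet interval $\mathcal{I}$, the key observation is that deleting the letters outside $\mathcal{I}$ leaves every weakly increasing subsequence with entries in a subinterval $[p,q] \subseteq \mathcal{I}$ untouched, so that $(u_\mathcal{I})_{p,q} = u_{p,q}$ for all $[p,q] \subseteq \mathcal{I}$. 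The forward implication is then immediate from Theorem \ref{thm:sequences}, and the reverse implication follows by taking $\mathcal{I} = \mathcal{A}$, since $u_\mathcal{A} = u$ and $v_\mathcal{A} = v$.

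For packing and the Schütizenberger involution I will exhibit explicit substitutions on the index set of statistics. Writing the support of $u$ as $\{a_1 < \cdots < a_m\}$, the bijection $a_i \mapsto i$ carries weakly increasing subsequences of $u$ with entries in $[a_i, a_j]$ to weakly increasing subsequences of ${\rm pack}(u)$ with entries in $[i,j]$, giving $({\rm pack}(u))_{i,j} = u_{a_i, a_j}$; since $u_{p,q}$ depends only on the support letters lying in $[p,q]$, both directions of the packing statement follow from Theorem \ref{thm:sequences} once equality of supports (equivalently, of contents) is known. Similarly, the map $x \mapsto n+1-x$ together with reversal sends a weakly increasing subsequence of $u^*$ with entries in $[p,q]$ to a weakly increasing subsequence of $u$ with entries in $[n+1-q, n+1-p]$ and back, yielding $(u^*)_{p,q} = u_{n+1-q, n+1-p}$; as $[p,q] \mapsto [n+1-q, n+1-p]$ is an involution on subintervals of $[n]$, compatibility with the Schütizenberger involution is immediate.

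Standardisation is the substantial case. Set $k = |u|$ and, using the common content, put $L_a = \sum_{b<a} |u|_b$, so that the occurrences of a support letter $a$ receive the standardised values $L_a + 1, \ldots, L_a + |u|_a$. Since ${\rm std}(u)$ has distinct entries, a weakly increasing subsequence of ${\rm std}(u)$ is the same as a strictly increasing one, and standardisation sets up a bijection between weakly increasing subsequences of $u$ and strictly increasing subsequences of ${\rm std}(u)$ carrying entries in $[p,q]$ to entries in $[L_p + 1,\, L_q + |u|_q]$. This gives the block-aligned identity $u_{p,q} = ({\rm std}(u))_{L_p + 1,\, L_q + |u|_q}$ for support letters $p \leq q$, from which the reverse implication is immediate: for $u, v$ of equal content, equality of all $({\rm std}(u))_{P,Q} = ({\rm std}(v))_{P,Q}$ specialises at these block-aligned intervals to $u_{p,q} = v_{p,q}$, and hence (as $u_{p,q}$ depends only on the support) to $\varphi_n(u) = \varphi_n(v)$.

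The forward implication is where the work lies, and I expect it to be the main obstacle. One must show that $u \equiv_{\rm gram} v$ forces $({\rm std}(u))_{P,Q} = ({\rm std}(v))_{P,Q}$ for \emph{all} intervals $[P,Q] \subseteq [k]$, including those whose endpoints fall strictly inside a block of equal letters. For such an interval the bijection above identifies $({\rm std}(u))_{P,Q}$ with the longest weakly increasing subsequence of $u$ using entries in $[a_i, a_j]$ but restricted to the last $s$ occurrences of $a_i$ and the first $t$ occurrences of $a_j$, and this partial-occurrence count is not literally one of the entries of $\varphi_n(u)$. My plan to control it is to pass to tableaux via the classical fact that standardisation commutes with Schensted insertion, $P({\rm std}(u)) = {\rm std}(P(u))$, together with the interpretation of the statistics as counts of bottom-row entries after deleting small letters (Remark \ref{rem:contraction}); the minimal letter of any such deleted word occupies the bottom row in full, which should let me express the partial-occurrence count through bottom-row data of $P(u)$ that is already determined by $\varphi_n(u)$ and the content. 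I would moreover reduce the two-sided restriction to a one-sided one using the Schütizenberger compatibility established above, noting that ${\rm std}(u^*)$ equals $({\rm std}(u))^*$ up to the order-reversing relabelling of $[k]$, so that only the deletion of initial occurrences of the minimal letter needs to be analysed directly.
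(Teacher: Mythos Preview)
Your treatment of restriction to intervals, packing, the Sch\"utzenberger involution, and the reverse implication for standardisation is essentially the same as the paper's: in each case one exhibits a bijection or change of indices on the family of statistics $w_{p,q}$ and then invokes Theorem~\ref{thm:sequences}, together with the content-preservation fact from \cite{C}. There is nothing to add there.

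The gap is in the forward direction for standardisation. You correctly isolate the difficulty --- that for a general interval $[P,Q]\subseteq[k]$ the quantity $({\rm std}(u))_{P,Q}$ corresponds to a ``partial-occurrence'' longest weakly increasing subsequence of $u$, which is not literally an entry of $\varphi_n(u)$ --- but your proposed route through $P({\rm std}(u))={\rm std}(P(u))$ does not close. The obstruction is that the grammic class of $u$ determines only $\varphi_n(u)$ (equivalently, the bottom rows of the tableaux $P(\bar u)$ for interval-restrictions $\bar u$), not the full tableau $P(u)$; so the identity ${\rm std}(P(u))=P({\rm std}(u))$ gives you no control, since $P(u)$ and $P(v)$ may differ even when $u\equiv_{\rm gram}v$. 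Likewise, deleting the letters $<P$ from ${\rm std}(u)$ corresponds on the $u$ side to deleting all letters below some $p$ \emph{together with} the first few occurrences of $p$, and this second deletion is not a restriction to an alphabet interval, so the compatibility you have already proved does not apply. Your one-sided reduction via Sch\"utzenberger involution is fine, but it does not remove the core problem.

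The paper bypasses tableaux entirely here and argues by a direct two-sided inequality. Given a longest increasing subsequence of ${\rm std}(u)$ over $[a,b]$, one reads off from its extreme values the letters $p\leq q$ of $u$ and the occurrence-indices $r,s$ at which it starts and ends; padding on the left with the first $r-1$ copies of $p$ and on the right with the last $|u|_q-s$ copies of $q$ yields
\[
\varphi_k({\rm std}(u))_{a,b}\ \leq\ \varphi_n(u)_{p,q}-(r-1)-(|u|_q-s)\ =\ \varphi_n(v)_{p,q}-(r-1)-(|v|_q-s),
\]
and then, starting from a longest weakly increasing subsequence of $v$ over $[p,q]$ and trimming at most $(r-1)+(|v|_q-s)$ terms so that the corresponding ${\rm std}(v)$-values lie in the appropriate subinterval, one obtains $\varphi_n(v)_{p,q}-(r-1)-(|v|_q-s)\leq\varphi_k({\rm std}(v))_{a,b}$. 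Symmetry in $u,v$ finishes the argument. This padding/trimming step is the missing idea in your plan.
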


\begin{proof}

Throughout let $u, v\in [n]^*$.  

\medskip
\textbf{Restriction to subintervals:} If $u_\mathcal{I} \equiv_{\rm gram} v_\mathcal{I}$ holds for all subintervals of $\mathcal{A}$, then in particular it holds for the interval $\mathcal{I} =\mathcal{A}$, giving $u \equiv_{\rm gram} v$. Now let  $\mathcal{I} = [s,t]$ be a subinterval of $[n]^*$ suppose that $u \equiv_{\rm gram} v$, aiming to show that $u_\mathcal{I} \equiv_{\rm gram} v_\mathcal{I}$. By Theorem \ref{thm:sequences} it suffices to show that $\varphi_n(u_\mathcal{I})_{p,q}=\varphi_n(u_\mathcal{I})_{p,q}$ for all $1 \leq p \leq q \leq n$. If $q<s$ or $p>t$, then this is immediate (there are no weakly increasing subsequences over alphabet $[p,q]$). Suppose then that $[p,q] \cap [s,t]$ is non-empty. In this case, it is clear that the weakly increasing subsequences of $u_\mathcal{I}$ (respectively, $v_\mathcal{I}$) over alphabet $[p,q]$ coincide with the weakly increasing subsequences of $u_\mathcal{I}$ (respectively, $v_{\mathcal{I}}$) over alphabet $[{\rm max}(p,s),{\rm min}(q,t)]$ which in turn coincide with the weakly increasing subsequences of $u$ (respectively, $v$) over alphabet $[{\rm max}(p,s),{\rm min}(q,t)]$. Thus
by Theorem \ref{thm:sequences} we have:
\begin{eqnarray*}
\varphi_n(u_\mathcal{I})_{p,q} = \varphi_n(u_\mathcal{I})_{{\rm max}(p,s),{\rm min}(q,t)} &=& \varphi_n(u)_{{\rm max}(p,s),{\rm min}(q,t)}\\ &=& \varphi_n(v)_{{\rm max}(p,s),{\rm min}(q,t)} \\
 & = & \varphi_n(v_\mathcal{I})_{{\rm max}(p,s),{\rm min}(q,t)} = \varphi_n(v_\mathcal{I})_{p,q},
\end{eqnarray*}
giving $u_\mathcal{I} \equiv_{\rm gram} v_\mathcal{I}$.

\medskip
\textbf{Packing:} Suppose $u = u_1\cdots u_k \in [n]^*$ where $u_i \in [n]$ and let $\mathcal{U}:=\{a_1<\cdots <a_m\}$ denote the support of $u$. For each $a_i \in \mathcal{U}$ let $\hat{a_i} = i$, so that ${\rm pack}(u) = \hat{u_1}\cdots \hat{u_k} \in [m]^*$. Then for all $1 \leq p \leq q \leq m$, the weakly increasing subsequences of ${\rm pack}(u)$ over alphabet $[p,q]$ are in length-preserving bijection with the weakly increasing subsequences of $u$ over alphabet $[a_p,a_q]$, so that
\begin{eqnarray}
\label{eq:hat1}
\varphi_m({\rm pack}(u))_{p,q} &=& \varphi_n(u)_{a_p,a_q}. 
\end{eqnarray}
Similarly, for all $1 \leq p \leq q \leq n$ such that $[p,q] \cap \mathcal{U} \neq \emptyset$, we have  
\begin{eqnarray}
\label{eq:hat2}
\varphi_n(u)_{p,q} &=& \varphi_m({\rm pack}(u))_{p',q'} 
\end{eqnarray}
where $p' = {\rm min}\{j: p \leq a_j\}$ and $q' = {\rm max}\{j: a_j \leq q\}$ again depend only on the support of $u$ and the chosen $p$ and $q$. Whilst if $1 \leq p \leq q \leq n$ is such that $[p,q] \cap \mathcal{U} =\emptyset$, we have $\varphi_n(u)_{p,q} = 0$.

Now, if $u\equiv_{\rm gram} v$ then by \cite[Lemma 1]{C} we have ${\rm cont}(u)={\rm cont}(v)$, and hence in particular $v$ also has support $\mathcal{U}$ and length $k$. By equation \eqref{eq:hat1} applied to both $u$ and $v$ and Theorem \ref{thm:sequences}, for all $1 \leq p \leq q \leq m$
$$\varphi_m({\rm pack}(u))_{p,q} = \varphi_n(u)_{a_p,a_q} = \varphi_n(v)_{a_p,a_q} = \varphi_m({\rm pack}(v))_{p,q},$$
giving ${\rm pack}(u) \equiv_{\rm gram} {\rm pack}(v)$. Conversely, suppose that ${\rm pack}(u) \equiv_{\rm gram} {\rm pack}(v)$ and ${\rm cont}(u)={\rm cont}(v)$. Then $u$ and $v$ have the same length and the same support set $\mathcal{U}$. Let $1 \leq p \leq q \leq n$. 
If $[p,q] \cap \mathcal{U} =\emptyset$, then we immediately have $\varphi_n(u)_{p,q} = 0 = \varphi_n(v)_{p,q}$. Otherwise, $[p,q] \cap \mathcal{U}  \neq \emptyset$ and by equation  \eqref{eq:hat2} applied to both $u$ and $v$ and Theorem \ref{thm:sequences},
$$\varphi_n(u)_{p,q} = \varphi_m({\rm pack}(u))_{p',q'} = \varphi_m({\rm pack}(v))_{p',q'} = \varphi_n(v)_{p,q},$$
giving $u \equiv_{\rm gram} v$.

\medskip
\textbf{Standardisation:} Let $u = u_1 \cdots u_k$, where each $u_i \in [n]$, and let ${\rm std}(u) = u_1'\cdots u_k' \in [k]^*$. By construction of ${\rm std}(u)$ for all $1 \leq p \leq q \leq n$, the sequence $u_{i_1} \leq u_{i_2} \leq \cdots \leq u_{i_t}$ is a weakly increasing subsequence of $u$ over the interval $[p,q]$ if and only if $u_{i_1}' < u_{i_2}' < \cdots < u_{i_t}'$ is a (weakly) increasing subsequence of ${\rm std}(u)$ over the interval $[a,b]$ where $a = 1+\sum_{i<p} |u|_i$ and $b=k-\sum_{i>q} |u|_i$. Note that $a$ and $b$ are determined by $p, q$ and the content of $u$ only and, moreover, $a \leq b$ holds provided that there is at least one non-empty subsequence of $u$ over the interval alphabet $[p,q]$ --  in this case we have $\varphi_n(u)_{p,q} = \varphi_k({\rm std}(u))_{a,b}$. On the other hand, if $u$ does not contain any weakly increasing subsequence over this interval, then $u$ in particular contains no letters from this interval.

Now suppose that ${\rm std}(u) \equiv_{\rm gram} {\rm std}(v)$ and ${\rm cont}(u)={\rm cont}(v)$. Then for all $1 \leq p \leq q \leq n$ either $u$ (and hence also $v$) contains a letter from $[p,q]$ giving 
$$\varphi_n(u)_{p,q} = \varphi_k({\rm std}(u))_{a,b} = \varphi_k({\rm std}(v))_{a,b} = \varphi_n(v)_{p,q},$$
or else $u$ (and hence also $v$) contains no letter from $[p,q]$ giving $\varphi_n(u)_{p,q} = 0 = \varphi_n(v)_{p,q}$. Then by Theorem \ref{thm:sequences} $u \equiv_{\rm gram} v$. 

Conversely, suppose that $u \equiv_{\rm gram} v$. By \cite[Lemma 1]{C} we have ${\rm cont}(u)={\rm cont}(v)$.
As before, let us write $u=u_1 \cdots u_k$, ${\rm std}(u) = u_1'\cdots u_k'$, $v=v_1 \cdots v_k$ and ${\rm std}(v) = v_1'\cdots v_k'$, and let  $1 \leq a \leq b \leq k$ be arbitrary. We show that $\varphi_k({\rm std}(u))_{a,b} \leq \varphi_k({\rm std}(v))_{a,b}$.

Suppose then that $u_{i_1}' < \cdots < u_{i_t}'$ is a maximum length (weakly) increasing subsequence of ${\rm std}(u)$ over the interval $[a,b]$. Setting $a' = u_{i_1}'$ and $b' = u_{i_t}'$ we have $a \leq a' \leq b' \leq b$ and 
\begin{equation}
\label{eq:1}
\varphi_k({\rm std}(u))_{a,b} = \varphi_{k}({\rm std}(u))_{a',b'}.
\end{equation}
Without loss of generality suppose that (reading from left to right) $u_{i_1}$ is the $r$th $p$ of $u$ and $u_{i_t}$ is the $s$th $q$ of $u$ for some $p,q \in [n]$, and $1 \leq r \leq |u|_p=|v|_p$, $1 \leq s \leq |u|_q=|v|_q$. Then $a'= r + \sum_{x=1}^{p-1} |u|_x$, $b'= s + \sum_{x=1}^{q-1} |u|_x$ and $u_{i_1} \leq \cdots \leq u_{i_t}$ is a weakly increasing subsequence of $u$ over the interval $[p,q]$ starting on the $r$th $p$ of $u$ and ending on the $s$th $q$ of $u$. By including the first $r-1$ symbols $p$ of $u$ (each of which appears to the left of $u_{i_1})$ and the last $(|u|_q-s)$ symbols $q$ of $u$ (each of which appears to the right of $u_{i_t}$), we see that
$$\underbrace{p \leq \cdots \leq p}_{r-1} \leq u_{i_1} \leq \cdots \leq u_{i_t} \leq \underbrace{q \leq \cdots \leq q}_{|u|_q-s}$$
is also a weakly increasing subsequence of $u$ over the interval $[p,q]$. Thus
$$\varphi_n(u)_{p,q} \geq  \varphi_k({\rm std}(u))_{a'b' 
} +(r-1) + (|u|_q-s)$$
which together with the fact that $u \equiv_{\rm gram} v$ gives \begin{equation}
\label{eq:2}
\varphi_{k}({\rm std}(u))_{a',b'} \leq  \varphi_n(u)_{p,q} - (r-1) - (|u|_q-s)=\varphi_n(v)_{p,q} - (r-1) - (|v|_q-s).
\end{equation}

Now let $v_{j_1} \leq \cdots \leq v_{j_{\ell}}$ be a maximum length weakly increasing subsequence of $v$ over the interval $[p,q]$. As in the first paragraph, setting $e=1+\sum_{i<p} |v|_i$ and $f=k-\sum_{i>q} |v|_i$ we see that $v_{j_1}' < \cdots < v_{j_{\ell}}'$ is an increasing sequence over alphabet $[e,f]$. Moreover, since $u$ and $v$ have the same content, we see that $e \leq a' \leq b' \leq f$. Let $L \geq 0$ be the length of the uniquely determined (but possibly empty) maximal increasing subsequence of $v_{j_1}' < \cdots < v_{j_{\ell}}'$ over alphabet $[a',b']$. Noting that this subsequence is obtained by removing all symbols $v_{j_i}'$ with the property that $v_{j_i}$ is equal to either one of the first $r-1$ symbols $p$ of $v$ or one of the last $|v|_q-s$ symbols $p$ of $v$, we have
$$L \geq \ell - (r-1) - (|v|_q-s) = \varphi_n(v)_{p,q} - (r-1) - (|u|_q-s),$$ and hence
\begin{equation} 
\label{eq:3}\varphi_k({\rm std}(v))_{a'b'} \geq L \geq \varphi_n(v)_{p,q} - (r-1) - (|v|_q-s).\end{equation}
By equations \eqref{eq:1}, \eqref{eq:2} and \eqref{eq:3} together with the fact that $a \leq a' \leq b' \leq b$
we then have
\begin{eqnarray*}
\varphi_k({\rm std}(u))_{a,b} &=& \varphi_k({\rm std}(u))_{a',b'}\\ &\leq& \varphi_n(v)_{p,q} - (r-1)-(|v|_q-s) \\
&\leq& \varphi_k({\rm std}(v))_{a'b'}\\
&\leq& \varphi_k({\rm std}(v))_{a,b}.
\end{eqnarray*}
A dual argument interchanging the roles of $u$ and $v$ shows that $\varphi_k({\rm std}(v))_{a,b} \leq \varphi_k({\rm std} (u))_{a,b}$. Since $a,b$ were arbitrary, this gives ${\rm std}(u) \equiv_{\rm gram} {\rm std}(v)$.

\medskip
\textbf{Sch\"utzenberger involution:} Suppose that $u=u_1\cdots u_k$ where each $u_i \in [n]$, and notice that for all $1 \leq p \leq q \leq n$
\begin{eqnarray*}
    \varphi_n(u)_{p,q} &=& \mbox{ max. length of a weakly increasing subsequence of } u_1\cdots u_k \mbox{ over } [p,q]\\
    &=& \mbox{ max. length of a weakly decreasing subsequence of } u_k\cdots u_1\mbox{ over } [p,q]\\
    &=& \mbox{ max. length of a weakly increasing subsequence of } (n-u_k+1)\cdots (n-u_1+1)\\&& \mbox{ over } [n-q+1,n-p+1]\\
    &=& \varphi_n(u^*)_{n-q+1,n-p+1}.
\end{eqnarray*}
That is, $\varphi_n(u^*)$ is equal to the matrix obtained from $\varphi_n(u)$ by flipping along the anti-diagonal; call this matrix $(\varphi_n(u))^\Delta$.  
Then:  
$$u \equiv_{\rm gram} v \Leftrightarrow \varphi_n(u) = \varphi_n(v) \Leftrightarrow (\varphi_n(u))^\Delta = (\varphi_n(v))^\Delta \Leftrightarrow \varphi_n(u^*) = \varphi_n(v^*) \Leftrightarrow u^* \equiv_{\rm gram} v^*.$$

\end{proof}
\subsection{Relationship to charge sequences}
For a standard word $w$ on alphabet $[k]$ we define the charge sequence of $w$ denoted ${\rm chseq}(w)$ to be the word of length $k=|w|$ over the alphabet $\{0, \ldots, k-1\}$ defined iteratively by ${\rm chseq}(w)_1=0$ and for all $i \geq 2$ 
$${\rm chseq}(w)_i = \begin{cases} {\rm chseq}(w)_i & \mbox{ if } i \mbox{ occurs to the left of } i-1 \mbox{ in }w \\
{\rm chseq}(w)_i +1& \mbox{ if }  i \mbox{ occurs to the right of } i-1 \mbox{ in }w.
\end{cases}$$
The charge of $w$ is the sum of the individual entries in the charge sequence.

\begin{example}
If $w=82456137$, then ${\rm chseq}(w) = 00112344$ and the charge of $w$ is $15$. 
\end{example}

\begin{prop}
Let $u$ and $v$ be  standard words. If
$u \equiv _{\rm gram} v$ then ${\rm chseq}(u) = {\rm chseq}(v)$.
\end{prop}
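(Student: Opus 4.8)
The plan is to express each entry of the charge sequence of a standard word $w$ directly in terms of the quantities $w_{p,q}$ that govern the grammic congruence through Theorem \ref{thm:sequences}. First I would read off from the defining recurrence (interpreting the evident typo on the right-hand side as a reference to ${\rm chseq}(w)_{i-1}$) that the increment ${\rm chseq}(w)_i - {\rm chseq}(w)_{i-1}$ equals $1$ when $i-1$ appears to the left of $i$ in $w$, and $0$ otherwise.

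The key observation is that for a standard word this increment is exactly $w_{i-1,i}-1$. Indeed, since $w$ is standard the only letters lying in the interval $[i-1,i]$ are the single occurrences of $i-1$ and of $i$; as these are distinct, a weakly increasing subsequence over this interval is automatically strictly increasing, so the maximal such subsequence has length $2$ precisely when $i-1$ precedes $i$ (giving the subsequence $(i-1)\,i$) and length $1$ otherwise. Thus $w_{i-1,i}=2$ if and only if $i-1$ appears to the left of $i$, and comparing with the previous paragraph gives ${\rm chseq}(w)_i - {\rm chseq}(w)_{i-1} = w_{i-1,i}-1$.

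Telescoping from the base case ${\rm chseq}(w)_1 = 0$ then yields the closed form ${\rm chseq}(w)_i = \sum_{j=2}^{i}\bigl(w_{j-1,j}-1\bigr)$ for every $i$, expressing the whole charge sequence in terms of the numbers $w_{j-1,j}$. Finally, if $u \equiv_{\rm gram} v$ then Theorem \ref{thm:sequences} gives $u_{p,q}=v_{p,q}$ for all $1 \le p \le q$; in particular $u_{j-1,j}=v_{j-1,j}$ for every $j$, and substituting into the closed form gives ${\rm chseq}(u)_i = {\rm chseq}(v)_i$ for all $i$, hence ${\rm chseq}(u) = {\rm chseq}(v)$.

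I do not expect a genuine obstacle here: once the charge increment is identified with $w_{i-1,i}-1$, the result is essentially a one-line consequence of Theorem \ref{thm:sequences}. The only point requiring care is the use of the standardness hypothesis, which is exactly what forces each interval $[i-1,i]$ to contain only the two letters $i-1$ and $i$ and hence pins $w_{i-1,i}$ to the set $\{1,2\}$; for non-standard words the same identification would fail, and indeed only the stated implication (and not its converse) holds in general.
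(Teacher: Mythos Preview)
Your proposal is correct and follows essentially the same approach as the paper: both arguments identify the charge-sequence increment with $w_{i-1,i}-1$ (the paper writes this as $\varphi_k(u)_{i-1,i}-\varphi_k(u)_{i-1,i-1}$, using that the diagonal entry is $1$ for a standard word), telescope to a closed form, and then invoke Theorem~\ref{thm:sequences}.
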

\begin{proof}
Let $u$ and $v$ be standard words of length $k$, over alphabet $[k]$. Since $u$ and $v$ are standard, each letter $i \in [k]$ appears exactly once giving $\varphi_k(u)_{i,i} = \varphi_k(v)_{i,i} = 1$ for all $i=1, \ldots, k$.

Suppose that $u \equiv_{\rm gram} v$. By Theorem \ref{thm:sequences} we know that $\varphi_k(u) = \varphi_k(v)$. In particular, we have $\varphi_k(u)_{i-1,i} = \varphi_k(v)_{i-1,i}$ for all $i=2, \ldots, k$. Moreover, this common value is $1$ if $i$ occurs to the left of $i-1$ in both $u$ and $v$, and $2$ if $i$ occurs to the right of $i-1$ in both $u$ and $v$. Thus, for all $i \geq 2$ we have \begin{eqnarray*}
\varphi_k(u)_{i-1,i} - \varphi_k(u)_{i-1,i-1} &=& \begin{cases} 0 & \mbox{ if } i \mbox{ occurs to the left of } i-1 \mbox{ in }u \\
1& \mbox{ if }  i \mbox{ occurs to the right of } i-1 \mbox{ in }u
\end{cases}\\
&=&\begin{cases} 0 & \mbox{ if } i \mbox{ occurs to the left of } i-1 \mbox{ in }v \\
1& \mbox{ if }  i \mbox{ occurs to the right of } i-1 \mbox{ in }v
\end{cases}\\ &=&\varphi_k(v)_{i-1,i} - \varphi_k(v)_{i-1,i-1}.
\end{eqnarray*}
Then ${\rm chseq}(u)_1=0={\rm chseq}(v)_1$ and for all $j \geq 2$  it follows that
$${\rm chseq}(u)_j = \sum_{i=2}^j \varphi_k(u)_{i-1,i} - \varphi_k(u)_{i-1,i-1} = \sum_{i=2}^j \varphi_k(v)_{i-1,i} - \varphi_k(v)_{i-1,i-1} = {\rm chseq}(v)_j$$
giving ${\rm chseq}(v) = {\rm chseq}(v)$, as required.
\end{proof}
The converse does not hold however; for example taking $u = 3412$ and $v=1324$ one finds that ${\rm chseq}(u) = {\rm chseq}(v) = 0112$. However, the maximum length of a weakly increasing subsequence of $u$ is $2$, whilst the maximum length of a weakly increasing subsequence of $v$ is $3$, demonstrating that the two words are not grammic equivalent.

\begin{remark}
\label{rem:cyclic}
In \cite{CM}, the maximum diameter $d(M)$ of a connected component of the cyclic shift graph of various plactic-like monoids $M$ was studied, with exact values computed for the hypoplactic, sylvester, taiga and stalactic monoids. Upper and lower bounds were provided in the case of the plactic monoid $\mathbb{P}_n$, specifically \cite[Proposition 4.3]{CM} states that 
$$n-1\leq d(\mathbb{P}_n) \leq 2n-3.$$ The argument given to establish the lower bound of $n-1$ made use of the notion of co-charge sequence; a similar argument holds for the case of the grammic monoid using charge sequences. Moreover, since the grammic monoid $\mathbb{G}_n$ is a quotient of the plactic monoid $\mathbb{P}_n$, it is clear that $n-1 \leq d(\mathbb{G}_n) \leq d(\mathbb{P}_n) \leq 2n-3$ also.
\end{remark}

\section{Relations in the grammic monoid}
In \cite{C} a finite presentation for the grammic monoid of rank $3$ is given, but the general case is left open. In this section we consider some natural generalisations of 
the Knuth relations of the plactic monoid, the relations of the patience sorting monoid, and the two-column relations of the plactic monoid \cite{CGM15} that may provide the key to answering this question.

\subsection{Generalised Knuth relations and patience sorting monoids}
As observed in \cite{C}, if $u$ and $v$ are equivalent words in the plactic monoid of rank $n$, then $u \equiv_{\rm gram} v$. In particular, each of the Knuth relations of rank $n$ is satisfied in the grammic monoid. Recall that the set $\mathcal{R}_{{\rm plac},[n]}$ of Knuth relations of rank $n$ splits into two families:
$$\mathcal{R}_{{\rm plac}, [n]}:=\{(yz x,yxz):  x,y, z \in [n], x < y \leq z \} \cup \{(zxy,xzy): x,y, z \in [n], x \leq  y < z \}.$$
In this subsection we prove that the grammic monoid satisfies a set of relations generalising the first family of Knuth relations. 

\begin{prop} 
\label{prop:rel}
Let $u,v, w \in [n]^*$ and let $y$ be the least letter occurring in $w$. Suppose that $|w|_y \geq |v|$ and for each letter $x$ occurring in $v$ and each letter $z$ occurring in $u$ we have $x<y \leq z$. Then $wuv \equiv_{\rm gram} wvu$.
\end{prop}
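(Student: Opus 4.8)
The plan is to apply Theorem~\ref{thm:sequences}: since $wuv \equiv_{\rm gram} wvu$ holds if and only if $\varphi_n(wuv) = \varphi_n(wvu)$, it suffices to prove that for every pair $1 \leq p \leq q \leq n$ the maximum length of a weakly increasing subsequence over $[p,q]$ is the same in $wuv$ and $wvu$; that is, $(wuv)_{p,q} = (wvu)_{p,q}$. The hypotheses give the structural picture I would record first: every letter of $v$ lies in $[1,y-1]$, every letter of $u$ lies in $[y,n]$, and $w$ consists of letters greater than or equal to $y$, containing at least $|w|_y \geq |v|$ occurrences of the letter $y$ itself. (If $v = \varepsilon$ the statement is trivial, so one may assume $|v| \geq 1$.)

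Next I would split into three cases according to the position of $[p,q]$ relative to $y$. If $q < y$, then only the letters of $v$ fall in $[p,q]$, and since $v$ occurs as a single block (with $w$ and $u$ contributing nothing) in both words, both counts equal $v_{p,q}$. If $p \geq y$, then only the letters of $w$ and $u$ fall in $[p,q]$; in each of $wuv$ and $wvu$ these appear in the order ``$w$-letters followed by $u$-letters,'' so again the two counts agree. This leaves the essential case $p < y \leq q$, where $[p,q]$ straddles the threshold $y$.

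In the straddling case I would exploit that a weakly increasing subsequence cannot place a letter less than $y$ after a letter greater than or equal to $y$. In $wuv$ the letters $\geq y$ all precede the letters $< y$ positionally (the former sit in $wu$, the latter in $v$), so no weakly increasing subsequence can use both kinds; hence $(wuv)_{p,q} = \max\bigl(v_{p,y-1},\,(wu)_{y,q}\bigr)$. In $wvu$, by contrast, the small letters of $v$ sit between the large letters of $w$ and those of $u$, so a weakly increasing subsequence is either purely large (drawn from $w$ then $u$) or consists of small letters from $v$ followed by large letters from $u$; this yields $(wvu)_{p,q} = \max\bigl((wu)_{y,q},\,v_{p,y-1} + u_{y,q}\bigr)$.

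The crux---and the only place the hypothesis $|w|_y \geq |v|$ is used---is to show these two maxima coincide. As $(wuv)_{p,q} \leq (wvu)_{p,q}$ is automatic from the two formulas, the real content is the reverse inequality, for which it suffices to prove $(wu)_{y,q} \geq v_{p,y-1} + u_{y,q}$. Here I would build an explicit long subsequence: since $y \leq q$ and $w$ contains at least $|w|_y$ copies of $y$, these copies form a weakly increasing subsequence of $w$ over $[y,q]$; appending a maximal weakly increasing subsequence of $u$ over $[y,q]$ (legitimate as every letter of $u$ is $\geq y$ and $u$ follows $w$) produces a weakly increasing subsequence of $wu$ of length $|w|_y + u_{y,q}$. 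Thus $(wu)_{y,q} \geq |w|_y + u_{y,q} \geq |v| + u_{y,q} \geq v_{p,y-1} + u_{y,q}$, which also forces $(wu)_{y,q} \geq v_{p,y-1}$, so both maxima equal $(wu)_{y,q}$. I expect the bookkeeping of the straddling case---correctly justifying the two formulas for the maxima via the ``small-before-large'' positional constraint---to be the main obstacle, while the counting inequality itself is short once $|w|_y \geq |v|$ is brought to bear.
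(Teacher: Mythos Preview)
Your proof is correct and follows essentially the same approach as the paper: both invoke Theorem~\ref{thm:sequences}, split into the same three cases according to the position of $y$ relative to $[p,q]$, and in the straddling case use the $|w|_y$ copies of $y$ in $w$ to show that any weakly increasing subsequence involving letters of $v$ is dominated by one of the form ``all $y$'s from $w$ followed by a maximal increasing subsequence of $u$''. The only cosmetic difference is that the paper argues directly that the maximum in each of $wuv$ and $wvu$ equals $(wu)_{p,q}$ without first writing down your explicit formulas for the two maxima, but the underlying inequality $|w|_y + u_{y,q} \geq |v| + u_{y,q} \geq v_{p,y-1} + u_{y,q}$ is the same in both arguments.
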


\begin{proof}
Let $[p,q]$ be a subinterval of $[n]$. We show that $\varphi_n(wvu)_{p,q} = \varphi_n(wuv)_{p,q}$. If $y>q$, then any subsequence of $wvu$ or $wuv$ with letters taken from alphabet $[p,q]$ must, by assumption, be a subsequence of $v$ and so in this case we have $\varphi_n(wvu)_{p,q} = \varphi_n(v) =\varphi_n(wuv)_{p,q}$. If $y<p$, then any subsequence of $wvu$ or $wuv$ with letters taken from alphabet $[p,q]$ must, by assumption, not involve any letter of $v$ and so in this case we have $\varphi_n(wvu)_{p,q} = \varphi_n(wu) =\varphi_n(wuv)_{p,q}$. 

Suppose then that $y \in [p,q]$. By assumption,  the weakly increasing subsequence of $wvu$ formed as $w'u'$, where $w'$ is the subsequence containing all $y$'s in $w$ and $u'$ is a maximal weakly increasing subsequence of $u$ over alphabet $[p,q]$ will have greater length than a $[p,q]$-subsequence of $wvu$ involving a letter of $v$ (which must be of the form $v'u''$ for some weakly increasing subsequences $v'$ of $v$ and $u''$ of $u$ and hence has length bounded above by $|v| + |u'| \leq |w|_y + |u'| = |w'|+|u'|$).  Thus $\varphi_n(wvu)_{p,q}$ is equal to the maximum length of a weakly increasing subsequence of $wvu$ over alphabet $[p,q]$ which does \emph{not} involve any letter from $v$. That is, $\varphi_n(wvu)_{p,q} =\varphi_n(wu)_{p,q}$. Likewise, $w'u'$ (regarded now as a subsequence of $wuv$) also has greater length than any $[p,q]$-subsequence of $wuv$ involving a letter of $v$ (which must be of the form $v'$ for some subsequence $v'$ of $v$ and hence has length bounded above by $|v| \leq |w|_y = |w'| \leq |w'|+|u'|$). Thus $\varphi_n(wvu)_{p,q}$ is also equal to $\varphi_n(wu)_{p,q}$.
\end{proof}
\begin{remark}
Taking $u$ or $v$ to be the empty  word in the previous lemma yields equal words, whilst taking $u$, $v$ and $w$ each to be words of length $1$ yields one of the Knuth relations. Proposition \ref{prop:rel} is not a consequence of the Knuth relations; for example, taking $w=2$, $v=1$ and $u=32$ and applying Schensted's algorithm to the words $wuv=2321$ and $wvu=2132$ does not yield the same result.
\end{remark}
The left patience sorting monoid \cite{CMS} of rank $n$ is the monoid generated by $[n]$ subject to the relations:
$$\mathcal{R}_{{\rm lps}, [n]}:=\{(yu_m\cdots u_1 x,yxu_m\cdots u_1): m \geq 1, x,y, u_i \in [n], x < y \leq u_1 < u_2 <\cdots < u_m \}.$$
\begin{corollary}
The grammic monoid of rank $n$ is a quotient of the left patience sorting monoid of rank $n$.    
\end{corollary}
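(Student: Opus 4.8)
The plan is to show that the left patience sorting congruence $\equiv_{\rm lps}$ is contained in the grammic congruence $\equiv_{\rm gram}$. Since both the left patience sorting monoid and the grammic monoid of rank $n$ are quotients of the free monoid $[n]^*$, this containment immediately yields a well-defined surjective homomorphism $[n]^*/\equiv_{\rm lps} \twoheadrightarrow [n]^*/\equiv_{\rm gram}$ sending each $\equiv_{\rm lps}$-class to the $\equiv_{\rm gram}$-class of the same word, thereby exhibiting the grammic monoid as a quotient of the left patience sorting monoid. Because a congruence is generated by its defining relations, it suffices to verify that each generating relation in $\mathcal{R}_{{\rm lps},[n]}$ holds in the grammic monoid.

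First I would take an arbitrary defining relation, namely a pair $(y u_m \cdots u_1 x,\ y x u_m \cdots u_1)$ with $m \geq 1$ and $x < y \leq u_1 < u_2 < \cdots < u_m$. The key step is to recognise this as a special instance of Proposition \ref{prop:rel}: I would set $w = y$, $v = x$ and $u = u_m \cdots u_1$, so that $wuv = y u_m \cdots u_1 x$ and $wvu = y x u_m \cdots u_1$ recover exactly the two sides of the relation. The remaining work is then just to confirm that the hypotheses of Proposition \ref{prop:rel} are met. Since $w = y$ consists of a single letter, $y$ is trivially the least letter occurring in $w$ and $|w|_y = 1$; since $v = x$ also has length one, we have $|v| = 1$, so the condition $|w|_y \geq |v|$ reduces to $1 \geq 1$. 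The only letter of $v$ is $x$ and the only letters of $u$ are $u_1, \ldots, u_m$, and the chain $x < y \leq u_1 < \cdots < u_m$ guarantees $x < y \leq u_i$ for every $i$, which is precisely the requirement that each letter of $v$ is strictly less than $y$ and each letter of $u$ is greater than or equal to $y$. Proposition \ref{prop:rel} then gives $wuv \equiv_{\rm gram} wvu$, i.e. the chosen relation holds in the grammic monoid.

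This argument amounts to matching the defining relations of the two presentations, so I do not expect any genuine obstacle. The one point deserving care is the bookkeeping of the inequalities, and in particular observing that the single occurrence of $y$ in $w$ is exactly enough to satisfy $|w|_y \geq |v| = 1$; this is what makes the length-one word $v = x$ admissible and is the precise sense in which the left patience sorting relations sit inside the family handled by Proposition \ref{prop:rel}.
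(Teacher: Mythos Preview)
Your argument is correct and follows exactly the same route as the paper: both show that every defining relation in $\mathcal{R}_{{\rm lps},[n]}$ is a special case of Proposition~\ref{prop:rel} (with $w=y$, $v=x$, $u=u_m\cdots u_1$), so that the canonical map $[n]^*\to\mathbb{G}_n$ factors through the left patience sorting monoid. Your write-up merely makes the verification of the hypotheses of Proposition~\ref{prop:rel} explicit where the paper leaves it implicit.
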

\begin{proof}
Let $g: [n]^* \rightarrow \mathbb{G}_n$ be the homomorphism sending each $w\in [n]^*$ to its grammic class. It follows from Proposition \ref{prop:rel} that each defining relation of the left patience sorting monoid of rank $n$ holds in the grammic monoid of rank $n$, and so $g$ factors through the left patience sorting monoid. 
\end{proof}
\begin{lemma}
\label{lem:123}
For $n \leq 3$ the grammic congruence on $[n]^*$ is the least congruence generated by 
$\mathcal{R}_n:=\mathcal{R}_{{\rm plac}, [n]} \cup \mathcal{R}_{{\rm lps}, [n]}$.    
\end{lemma}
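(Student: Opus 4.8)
The plan is to establish the stated equality of congruences by proving the two inclusions separately, relying on the known presentations of the low-rank grammic monoids recalled in Section~2 together with the Corollary just proved (that $\mathbb{G}_n$ is a quotient of the left patience sorting monoid). For the inclusion of the congruence generated by $\mathcal{R}_n$ into $\equiv_{\rm gram}$, it suffices to check that every defining pair of $\mathcal{R}_n$ is a pair of $\equiv_{\rm gram}$-related words. The pairs coming from $\mathcal{R}_{{\rm plac},[n]}$ lie in $\equiv_{\rm gram}$ because plactic-equivalent words are always grammic-equivalent (the first bullet point recalled from \cite{C}); the pairs coming from $\mathcal{R}_{{\rm lps},[n]}$ lie in $\equiv_{\rm gram}$ precisely because $\mathbb{G}_n$ is a quotient of the left patience sorting monoid. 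Hence the congruence generated by $\mathcal{R}_n$ is contained in $\equiv_{\rm gram}$, and this inclusion holds with no restriction on $n$.

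For the reverse inclusion I would argue by cases on $n \in \{1,2,3\}$, in each case comparing $\mathcal{R}_n$ against the known presentation of $\equiv_{\rm gram}$. When $n=1$ both $\mathcal{R}_{{\rm plac},[1]}$ and $\mathcal{R}_{{\rm lps},[1]}$ are empty and $\equiv_{\rm gram}$ is trivial, so there is nothing to prove. When $n=2$, Choffrut's result (the fifth recalled bullet point) gives $\equiv_{\rm gram} \,=\, \equiv_{\rm plac}$ on $[2]^*$, and since the plactic congruence is generated by $\mathcal{R}_{{\rm plac},[2]} \subseteq \mathcal{R}_2$, the grammic congruence is contained in the congruence generated by $\mathcal{R}_2$. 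The substantive case is $n=3$, where the sixth recalled bullet point of Section~2 states that $\equiv_{\rm gram}$ on $[3]^*$ is the least congruence generated by $\mathcal{R}_{{\rm plac},[3]} \cup \{(3212,2132)\}$. Since $\mathcal{R}_{{\rm plac},[3]} \subseteq \mathcal{R}_3$, I only need to show that the single extra Choffrut generator $(3212,2132)$ lies in the congruence generated by $\mathcal{R}_3$.

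To do this I would exhibit a single member of $\mathcal{R}_{{\rm lps},[3]}$ that, modulo the Knuth relations, reproduces Choffrut's generator. Taking $m=2$, $x=1$, $y=2$, $u_1=2$, $u_2=3$ (which satisfies $x<y\leq u_1<u_2$) yields the patience sorting relation $(2321,2132) \in \mathcal{R}_{{\rm lps},[3]} \subseteq \mathcal{R}_3$. It then remains to observe that $3212 \equiv_{\rm plac} 2321$: a direct Schensted computation shows that both words insert to the tableau with column $1,2,3$ and an additional $2$ appended to the bottom row, so the two words are connected by a chain of $\mathcal{R}_{{\rm plac},[3]}$-moves. Composing this chain with the relation $(2321,2132)$ places $(3212,2132)$ in the congruence generated by $\mathcal{R}_3$, completing the reverse inclusion for $n=3$ and hence the proof. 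The \emph{main obstacle} is this last bookkeeping step: verifying that Choffrut's chosen generator $(3212,2132)$ is recovered from an explicit left patience sorting relation only \emph{after} conjugating by Knuth moves, since $(3212,2132)$ is not itself literally a member of $\mathcal{R}_{{\rm lps},[3]}$; everything else is a routine appeal to the previously established facts.
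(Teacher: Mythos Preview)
Your proposal is correct and follows essentially the same approach as the paper's own proof: both handle the easy inclusion via the previously established facts, dispose of $n=1,2$ using the known coincidence of $\equiv_{\rm gram}$ with the trivial and plactic congruences respectively, and for $n=3$ reduce Choffrut's extra generator $(3212,2132)$ to $\mathcal{R}_3$ via the chain $3212 \equiv_{\rm plac} 2321 \equiv_{\rm lps} 2132$. The only cosmetic difference is that the paper additionally remarks $\mathcal{R}_{{\rm lps},[2]} \subseteq \mathcal{R}_{{\rm plac},[2]}$ so that $\mathcal{R}_2 = \mathcal{R}_{{\rm plac},[2]}$, but this is not needed for the argument.
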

\begin{proof}
Since every relation in $\mathcal{R}_n$ holds in the grammic monoid, it suffices to show that for any two equivalent words there is a sequence of relations in $\mathcal{R}_n$ that can be applied to obtain one word from the other. For $n=1$, this is trivial, since the grammic monoid is the free monoid of rank $1$, and indeed $\mathcal{R}_1 = \emptyset$ in this case. For $n=2$, the grammic monoid of rank $2$ coincides with the plactic monoid of rank $2$, and here we indeed have $\mathcal{R}_2 = \mathcal{R}_{{\rm plac},[2]}$ since $\mathcal{R}_{{\rm lps},[2]}\subseteq \mathcal{R}_{{\rm plac},2}$. For $n=3$, \cite[Theorem 1]{C} gives that the grammic monoid of rank $3$ is determined by the plactic relations together with $(3212, 2132)$. Noting that:
$3212\equiv_{\rm plac} 2321 \equiv_{\rm lps} 2132,$
we see that this relation lies in $\mathcal{R}_3$, and therefore the relations $\mathcal{R}_3$ determine the grammic monoid of rank $3$. 
\end{proof}

\begin{remark}
\label{rem:Cconj}
For $n=4$ Choffrut conjectured that the congruence is generated by the Knuth relations together with the pairs $(dbac, badc)$ where $a < b \leq c < d$. We note that each such pair lies in $\mathcal{R}_4$ since
$dbac\equiv_{\rm plac} dbca \equiv_{\rm plac} bdca \equiv_{\rm lps} badc\equiv_{\rm plac} bdac.$ 
\end{remark}

\subsection{Column relations}
A different presentation of the plactic monoid is provided in \cite{CGM15}. Let 
$$\mathcal{C}_n = \{w \in [n]^+: P(w) \mbox{ is a single column}\}.$$ Note that elements of $\mathcal{C}_n$ are precisely the words of the form $w_1\cdots w_m$ where each $w_i \in [n]$ and $n \geq w_1 >w_2 > \cdots >w_m \geq 1$. Thus the elements of $\mathcal{C}_n$ are in bijection with the non-empty subsets of $[n]$ and hence $\mathcal{C}_n$ is finite. We shall refer to the elements of $\mathcal{C}_n$ as `column words' on alphabet $[n]$. There is a natural partial order on $\mathcal{C}_n$ via $p \succeq q$ if and only if there exists a tableau in which the column of $p$ appears to the left of the column of $q$. A key observation from \cite{CGM15} is that if $p,q \in \mathcal{C}_n$ and $p \not\succeq q$, then the tableau $P(pq)$ has at most $2$ columns, and moreover $\langle \mathcal{C}_n | \mathcal{T}_{n}\rangle$ is a finite presentation of the plactic monoid of rank $n$ where
\begin{eqnarray*}
\mathcal{T}_{_n} &=& \{(pq,cd): p,q,c,d\in \mathcal{C}_n, p \not\succeq q,  c \succeq d, P(pq) = P(cd)\}\\
&&\cup  \;\;\{(pq,c): p,q,c \in \mathcal{C}_n, p \not\succeq q, P(pq) = P(c)\}.
\end{eqnarray*}
The set of `column readings' of tableaux provides a set of normal forms in the generators $\mathcal{C}_n$.  

The set of column words $\mathcal{C}_n$ is also a generating set for the grammic monoid of rank $n$, although as we have seen, a single grammic class can contain column readings of distinct tableaux. Recalling that whenever $w \equiv_{\rm gram} v$ we have that $P(w)$ and $P(v)$ have the same bottom row and the same content, in order to construct an (infinite) presentation of the grammic monoid, it will therefore suffice to describe for each $m \geq 2$ the pairs of words $(w,v)$ with $w \equiv_{\rm gram} v$ such that $w$ and $v$ are column readings of distinct tableaux with the same bottom row of length $m$. We shall refer to such pairs as `$m$-column grammic relations'. Clearly for each fixed $m$, the number of $m$-column grammic relations is finite. Ultimately, we would like to know whether there is a \emph{finite} set $\mathcal{W}_n$ of such `column relations' giving a finite presentation $\langle \mathcal{C}_n|\mathcal{T}_n\cup \mathcal{W}_n\rangle$ for the grammic monoid.

To begin with, we focus on $2$-column grammic relations. For each column word $c \in \mathcal{C}_n$ and each $1 \leq i \leq n$ such that $P(c)$ has an $i$th row,  let us write $P_i(c)$ to denote the entry in the $i$th row of $P(c)$. Thus, if $P(c)$ has $k$ rows, we have $c = P_k(c)\cdots P_1(c)$. For the rest of this subsection, let:
\begin{eqnarray}
\label{eq:6} c_1, c_2, d_1, d_2 \in \mathcal{C}_n \;  \mbox{  such that }:&& c_1 \succeq c_2 \mbox{  and }d_1 \succeq d_2\\
\label{eq:7}&& P(c_1c_2) \neq P(d_1d_2)\\
\label{eq:8}&&{\rm cont}(c_1c_2) = {\rm cont}(d_1d_2)\\
\label{eq:9}&& P_1(c_1) = P_1(d_1)\leq P_1(c_2) = P_1(d_2). 
\end{eqnarray}
Given \eqref{eq:6}-\eqref{eq:9}, we wish to characterise the circumstances under which $c_1c_2 \equiv_{\rm gram} d_1d_2$.

\begin{lemma}
\label{lem:two-col}
Suppose $c_1, c_2, d_1, d_2$ are as in \eqref{eq:6}-\eqref{eq:9} and that for some $1\leq i \leq n$
\begin{itemize}
    \item[(I)] $P(d_1d_2)$ can be obtained from $P(c_1c_2)$ by moving $i$ from the first column to the second column;
    \item[(II)] $c_2$ contains $i'$, the greatest letter strictly smaller than $i$ occurring in $c_1c_2$;
    \item[(III)] if $c_2$ contains a letter $z$ greater than $i$, then $c_1$ contains $z'$ such that $i<z' \leq z$.
\end{itemize}
Then $c_1c_2 \equiv_{\rm gram} d_1d_2$.
\end{lemma}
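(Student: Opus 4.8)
The plan is to invoke Theorem~\ref{thm:sequences} and reduce everything to a comparison of $\varphi_n(c_1c_2)$ and $\varphi_n(d_1d_2)$; that is, to show that for every subinterval $[p,q]$ of $[n]$ the maximal length of a weakly increasing subsequence over $[p,q]$ agrees for $c_1c_2$ and $d_1d_2$. The decisive simplification is that both words are column readings of \emph{two-column} tableaux. Writing $T = P(c_1c_2)$ and $T' = P(d_1d_2)$, and recalling that $\varphi_n$ is constant on plactic classes (Lemma~\ref{lem:rep}), I may compute each value on the column reading. Since the reading traverses each column from top to bottom and entries strictly decrease down a column, any weakly increasing subsequence uses at most one entry from each of the two columns; hence every finite entry of $\varphi_n(T)$ and $\varphi_n(T')$ lies in $\{0,1,2\}$. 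Letting $C^1, C^2$ denote the sets of entries of the first and second columns of $T$, I record the explicit rule: $\varphi_n(T)_{p,q} = 0$ exactly when $T$ has no entry in $[p,q]$; it equals $2$ exactly when there exist $a \in C^1 \cap [p,q]$ and $b \in C^2 \cap [p,q]$ with $a \le b$; and it equals $1$ otherwise. The same rule applies to $T'$ with its column-entry sets $D^1 = C^1 \setminus \{i\}$ and $D^2 = C^2 \cup \{i\}$, which is the effect of operation~(I).

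Next I would dispose of the intervals not containing $i$. If $i \notin [p,q]$ then $C^1 \cap [p,q] = D^1 \cap [p,q]$ and $C^2 \cap [p,q] = D^2 \cap [p,q]$, so the values agree trivially. So assume $p \le i \le q$. Here $i \in C^1 \cap [p,q]$ and $i \in D^2 \cap [p,q]$, so both $\varphi_n(T)_{p,q}$ and $\varphi_n(T')_{p,q}$ are at least $1$; it therefore suffices to prove the equivalence $\varphi_n(T)_{p,q} = 2 \Leftrightarrow \varphi_n(T')_{p,q} = 2$. I would also note at this point that, since moving $i$ into the second column must produce a genuine (strictly increasing) column, hypothesis~(I) forces $i \notin C^2$.

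The heart of the argument is this value-$2$ equivalence, proved by chasing witnesses. For the implication $\varphi_n(T')_{p,q} = 2 \Rightarrow \varphi_n(T)_{p,q} = 2$, take a witnessing pair $a' \in D^1 \cap [p,q]$, $b' \in D^2 \cap [p,q]$ with $a' \le b'$. If $b' \ne i$ the same pair witnesses value $2$ for $T$, since $D^1 \subseteq C^1$ and $b' \in C^2$. If $b' = i$ then $a' < i$, and here I invoke~(II): the letter $i'$, being the greatest letter below $i$ occurring in $T$, satisfies $a' \le i' < i$ and lies in $C^2$, so $i' \in C^2 \cap [p,q]$ and the pair $(a', i')$ witnesses value $2$ for $T$. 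Conversely, for $\varphi_n(T)_{p,q} = 2 \Rightarrow \varphi_n(T')_{p,q} = 2$, take $a \in C^1 \cap [p,q]$, $b \in C^2 \cap [p,q]$ with $a \le b$. If $a \ne i$ the same pair works for $T'$; if $a = i$ then, because $i \notin C^2$, the partner satisfies $b > i$, and hypothesis~(III) supplies $z' \in C^1$ with $i < z' \le b$, so that $z' \in D^1 \cap [p,q]$ and $(z', b)$ witnesses value $2$ for $T'$. Combining the two interval ranges gives $\varphi_n(c_1c_2) = \varphi_n(d_1d_2)$, whence $c_1c_2 \equiv_{\rm gram} d_1d_2$ by Theorem~\ref{thm:sequences}.

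The only genuinely delicate point is this value-$2$ equivalence, and within it the bookkeeping of which hypothesis rescues which degenerate witness: (II) is exactly what is needed when a second-column witness in $T'$ is the moved letter $i$, while (III) is exactly what is needed when a first-column witness in $T$ is $i$. The observation that two-column tableaux force all matrix entries into $\{0,1,2\}$ is what makes the case analysis tractable; I would double-check the boundary inequalities (that $i' \ge p$, that $z' \le q$, and so on) to confirm the replacement entries genuinely lie in $[p,q]$.
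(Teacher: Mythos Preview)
Your proof is correct and follows essentially the same approach as the paper: both reduce via Theorem~\ref{thm:sequences} to the observation that two-column tableaux force $\varphi_n$-entries into $\{0,1,2\}$, dispose of intervals not involving $i$, and then prove the value-$2$ equivalence by replacing a degenerate witness containing the moved $i$ using hypothesis~(II) in one direction and~(III) in the other. Your write-up is in fact slightly more explicit than the paper's (you introduce the column-entry sets $C^j, D^j$ and record $i \notin C^2$), but the argument is the same.
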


\begin{proof}
To see that $c_1c_2 \equiv_{\rm gram} d_1 d_2$, we demonstrate that for all subintervals $[p, q]$ of $[n]$, $\varphi_n(c_1c_2)_{p,q} = \varphi_n(d_1d_2)_{p,q}$. Since these values correspond to the maximum length of a weakly increasing subsequence over alphabet $[p,q]$ in the column products $c_1c_2$ and $d_1d_2$ respectively, we note that each is bounded above by $2$. Moreover, by equation \eqref{eq:8} we know that $\varphi_n(c_1c_2)_{p,q} = 0$ if and only if $\varphi_n(d_1d_2)_{p,q} = 0$. It therefore suffices to show that $\varphi_n(c_1c_2)_{p,q} = 2$ if and only if $\varphi_n(c_1c_2)_{p,q} = 2$.  To this end, first note that if all length two weakly increasing subsequences of $c_1c_2$ over alphabet $[p,q]$ do not contain the $i$ from $c_1$, and all length two weakly increasing subsequences of $d_1d_2$ over alphabet $[p,q]$ do not contain the $i$ from $d_2$, then the equivalence holds immediately (this will be the case if $i \not\in [p,q]$, for example).

Suppose then that $p \leq i \leq z \leq q$ and $i \leq  z$ is a length $2$ weakly increasing subsequence of $c_1c_2$ over alphabet $[p,q]$. By (I) we know that $i$ is contained in $c_1$ but not $c_2$ and, since $c_1$ is a column word, we must then have that $z$ is a letter of $c_2$ with $z>i$. Thus by condition (III) there exists $z'$ in $c_1$ such that $p \leq i<z' \leq z \leq q$, giving that $z' \leq z$ is a length $2$ weakly increasing subsequence of $d_1d_2$ on alphabet $[p,q]$.  

Conversely, suppose that $p \leq z \leq i \leq q$ and $z \leq i$ is a length $2$ weakly increasing subsequence of $d_1d_2$. By (I) we know that $i$ is contained in $d_2$ but not $d_1$ and, since $d_2$ is a column word, we must then have that $z$ is a letter of $d_1$ with $z<i$.  By condition (II), $c_2$ contains $i'$, and so $z \leq i'$ is a length $2$ weakly increasing subsequence of $c_1c_2$. 
\end{proof}

\begin{remark}
\label{rem:Cconj2}
Taking again the case $n=4$, we note that each of the relations suggested by Choffrut is a $2$-column relation from Lemma \ref{lem:two-col}.
\end{remark}
\section{Questions}

We conclude the paper with a number of questions. In terms of descriptions of the grammic monoid, one of the main outstanding questions is:
\begin{question}
Is there a finite presentation for the grammic monoid? 
\end{question}
\noindent
A possible approach to answering this may be further development of the approach via columns used in \cite{CGM15} for the plactic monoid, begun with our Lemma \ref{lem:two-col}. In light of Lemma \ref{lem:123} and Remark \ref{rem:Cconj}, one is also drawn to ask:
\begin{question}
Is the grammic congruence on $[n]^*$ equal to the least congruence generated by $\mathcal{R}_n:=\mathcal{R}_{{\rm plac}, [n]} \cup \mathcal{R}_{{\rm lps}, [n]}$? 
\end{question}

Turning to results concerning varieties of semigroups, Chen, Hu, Luo and O. Sapir \cite{CHLS} have shown that the variety generated by the monoid of $2 \times 2$ upper triangular tropical matrices is not finitely based. Their method does not extend to the monoids of $n \times n$ upper triangular tropical matrices for $n \geq 3$, and the question of whether such monoids are non-finitely based is open. In light of Corollary \ref{cor:identities}, the question can be rephrased as follows:

\begin{question}
Is the semigroup variety generated by the grammic monoid of rank $n$ non-finitely based?
\end{question}

In previous work with Cain and Kambites, we have shown that the variety generated by a single hypoplactic, stalactic or taiga monoid of rank at least $2$ coincides with the variety generated by the natural numbers together with a fixed finite monoid, forming a proper subvariety of the variety generated by the plactic monoid of rank $2$. Moreover, the variety generated by a single finite rank hypoplactic (stalactic, taiga) monoid of rank at least $2$ is equal to the variety generated by the infinite rank hypoplactic (stalactic, taiga) monoid.  By Corollary \ref{cor:varieties} and Corollary \ref{cor:infrank}, we know that the same statements do not hold for the grammic monoid, but one can ask: 
\begin{question}
Does there exist for each $n$ a \emph{finite} monoid $F_n$ such that the variety generated by the grammic monoid of rank $n$ is equal to either the variety generated by $F_n$, or the join of the variety generated by $F_n$ with the variety of commutative semigroups?
\end{question}

Finally, in light of Remark \ref{rem:cyclic}:

\begin{question}
What is the maximum diameter of a connected component of the cyclic shift graph of the grammic monoid of rank $n$?
\end{question}

\section*{Acknowledgements}
The authors were supported through the FCT – Fundação para a Ciência e a Tecnologia, I.P., under the scope of the projects UID/297/2025 and UID/PRR/297/2025 
(Centre for Mathematics and Applications - NOVA Math), which in particular funded a research visit of
the first author to Universidade NOVA Lisboa, where this research was carried out.

\end{document}